\documentclass[a4paper,oneside,10pt]{article}%
\usepackage{amsmath}
\usepackage{amsfonts}
\usepackage{amssymb}
\usepackage{graphicx}
\usepackage[square,numbers,sort&compress]{natbib}%
\setcounter{MaxMatrixCols}{30}
\providecommand{\U}[1]{\protect\rule{.1in}{.1in}}

\pagenumbering{arabic}
\setlength{\textwidth}{165mm}
\setlength{\textheight}{220mm}
\headsep=15pt \topmargin=-5mm \oddsidemargin=-0.36cm
\evensidemargin=-0.36cm \raggedbottom
\newtheorem{theorem}{Theorem}[section]

\newtheorem{definition}[theorem]{Definition}
\newtheorem{assumption}[theorem]{Assumption}

\newtheorem{lemma}[theorem]{Lemma}

\newtheorem{proposition}[theorem]{Proposition}
\newtheorem{remark}[theorem]{Remark}

\newenvironment{proof}[1][Proof]{\noindent\textbf{#1.} }{\ \rule{0.5em}{0.5em}}
\numberwithin{equation}{section}

\begin{document}

\title{A stochastic recursive optimal control problem under the G-expectation framework}
\author{Mingshang Hu \thanks{School of Mathematics, Shandong University,
humingshang@sdu.edu.cn. Research supported by the National Natural Science
Foundation of China (11201262)}
\and Shaolin Ji\thanks{Qilu Institute of Finance, Shandong University,
jsl@sdu.edu.cn. Hu, Ji, and Yang's research was partially supported by NSF of
China No. 10921101; and by the 111 Project No. B12023. }
\and Shuzhen Yang\thanks{School of mathematics, Shandong University, Jinan,
Shandong 250100, PR China. yangsz@mail.sdu.edu.cn. \textbf{This paper was
submitted to SIAM Journal on Control and Optimization on March 5, 2013
(Submitted manuscript No. 091225.)}}}
\date{}
\maketitle

\textbf{Abstract}. In this paper, we study a stochastic recursive optimal
control problem in which the objective functional is described by the solution
of a backward stochastic differential equation driven by $G$-Brownian motion.
Under standard assumptions, we establish the dynamic programming principle and
the related Hamilton-Jacobi-Bellman (HJB) equation in the framework of
$G$-expectation. Finally, we show that the value function is the viscosity
solution of the obtained HJB equation.

{\textbf{Key words}. }$G$-expectation, {backward stochastic differential
equations, stochastic optimal control, dynamic programming principle,
viscosity solution}

\textbf{AMS subject classifications.} 93E20, 60H10, 35K15

\addcontentsline{toc}{section}{\hspace*{1.8em}Abstract}

\section{Introduction}

It is well known that the nonlinear backward stochastic differential equation
(BSDE) was first introduced by Pardoux and Peng \cite{PP90}. Independently,
Duffie and Epstein \cite{DE} presented a stochastic differential recursive
utility which corresponds to the solution of a particular BSDE. Then the BSDE
point of view gives a simple formulation of recursive utilities (see
\cite{EPQ}).

Since then, the classical stochastic optimal control problem is generalized to
a so called "stochastic recursive optimal control problem" in which the cost
functional is described by the solution of BSDE. Peng \cite{peng-dpp} obtained
the Hamilton--Jacobi--Bellman equation for this kind of problem and proved
that the value function is its viscosity solution. In \cite{peng-dpp-1}, Peng
generalized his results and originally introduced the notion of stochastic
backward semigroups which allows him to prove the dynamic programming
principle in a very straightforward way. This backward semigroup approach is
proved to be a useful tool for the stochastic optimal control problems. For
instance, Wu and Yu \cite{WY} adopted this approach to study one kind of
stochastic recursive optimal control problem with the cost functional
described by the solution of a reflected BSDE. It is also introduced in the
theory of stochastic differential games by Buckdahn and Li in \cite{BL}. We
emphasize that Buckdahn et al. \cite{BLRT} obtained an existence result of the
stochastic recursive optimal control problem.

Motivated by measuring risk and other financial problems with uncertainty,
Peng \cite{P07a} introduced the notion of sublinear expectation space, which
is a generalization of probability space. As a typical case, Peng studied a
fully nonlinear expectation, called $G$-expectation $\mathbb{\hat{E}%
}\mathcal{[\cdot]}$ (see \cite{P10} and the references therein), and the
corresponding time-conditional expectation $\mathbb{\hat{E}}_{t}%
\mathcal{[\cdot]}$ on a space of random variables completed under the norm
$\mathbb{\hat{E}}[|\cdot|^{p}]^{1/p}$. Under this $G$-expectation framework
($G$-framework for short) a new type of Brownian motion called $G$-Brownian
motion was constructed. The stochastic calculus with respect to the
$G$-Brownian motion has been established. The existence and uniqueness of
solution of a SDE driven by $G$-Brownian motion can be proved in a way
parallel to that in the classical SDE theory. But the solvability of BSDE
driven by $G$-Brownian motion becomes a challenging problem. For a recent
account and development of $G$-expectation theory and its applications we
refer the reader to \cite{Peng2004, Peng2005, PengICM2010, STZ, Song11,
Song12, Nu, EJ-1, EJ-2, PSZ2012}.

Let us mention that there are other recent advances and their applications in
stochastic calculus that do not require a probability space framework. Denis
and Martini \cite{DenisMartini2006} developed quasi-sure stochastic analysis,
but they did not have conditional expectation. This topic was further examined
by Denis et al. \cite{DHP11} and Soner et al. \cite{STZ1}. It is worthing to
point out that Soner et al. \cite{STZ11} have obtained a deep result of
existence and uniqueness theorem for a new type of fully nonlinear BSDE,
called 2BSDE. Various stochastic control (game) problems are investigated in
\cite{Nutz, NZ, PZ, MPP} and the applications in finance are studied in
\cite{MPZ, NN}.

Recently Hu et. al studied the following BSDE driven by $G$-Brownian motion in
\cite{HJPS} and \cite{HJPS1}:%
\begin{align*}
Y_{t}  &  =\xi+\int_{t}^{T}f(s,Y_{s},Z_{s})ds+\int_{t}^{T}g(s,Y_{s}%
,Z_{s})d\langle B\rangle_{s}\\
&  -\int_{t}^{T}Z_{s}dB_{s}-(K_{T}-K_{t}).
\end{align*}
They proved that there exists a unique triple of processes $(Y,Z,K)$ within
our $G$-framework which solves the above BSDE under a standard Lipschitz
conditions on $f(s,y,z)$ and $g(s,y,z)$ in $(y,z)$. The decreasing
$G$-martingale K is aggregated and the solution is time consistent. Some
important properties of the BSDE driven by $G$-Brownian motion such as
comparison theorem and Girsanov transformation were given in \cite{HJPS1}.

In this paper, we study a stochastic recursive optimal control problem in
which the objective functional is described by the solution of a BSDE driven
by $G$-Brownian motion. In more details, the state equation is governed by the
following controlled SDE driven by $G$-Brownian motion
\begin{align*}
dX_{s}^{t,x,u}  &  =b(s,X_{s}^{t,x,u},u_{s})ds+h_{ij}(s,X_{s}^{t,x,u}%
,u_{s})d\langle B^{i},B^{j}\rangle_{s}+\sigma(s,X_{s}^{t,x,u},u_{s})dB_{s},\\
X_{t}^{t,x,u}  &  =x.
\end{align*}
The objective functional is introduced by the solution $Y_{t}^{t,x,u}$ of the
following BSDE driven by $G$-Brownian motion at time $t$:%

\[%
\begin{array}
[c]{l}%
-dY_{s}^{t,x,u}=f(s,X_{s}^{t,x,u},Y_{s}^{t,x,u},Z_{s}^{t,x,u},u_{s}%
)ds+g_{ij}(s,X_{s}^{t,x,u},Y_{s}^{t,x,u},Z_{s}^{t,x,u},u_{s})d\langle
B^{i},B^{j}\rangle_{s}-Z_{s}^{t,x,u}dB_{s}-dK_{s}^{t,x,u},\\
Y_{T}^{t,x,u}=\Phi(X_{T}^{t,x,u}),\text{ \ \ }s\in\lbrack t,T]\text{.}%
\end{array}
\]
We define the value function of our stochastic recursive optimal control
problem as follows:
\[
V(t,x)=\underset{u(\cdot)\in\mathcal{U}[t,T]}{\text{ess}\sup}Y_{t}^{t,x,u},
\]
where the control set is in the $G$-framework.

It is well known that dynamic programming and related HJB equations is a
powerful approach to solving optimal control problems (see \cite{Fleming W.H},
\cite{J.Yong} and \cite{peng-dpp}). The objective of our paper is to establish
the dynamic programming principle and investigate the value function in
$G$-framework. The main result of this paper states that $V$ is deterministic
continuous viscosity solution of the following HJB equation%
\begin{align*}
&  \partial_{t}V(t,x)+\sup_{u\in U}H(t,x,V,\partial_{x}V,\partial_{xx}%
^{2}V,u)=0,\\
&  V(T,x)=\Phi(x),\quad\ \ x\in\mathbb{R}^{n},
\end{align*}
where%
\[%
\begin{array}
[c]{cl}%
H(t,x,v,p,A,u)= & G(F(t,x,v,p,A,u))+\langle p,b(t,x,u)\rangle+f(t,x,v,\sigma
(x,u)p,u),\\
F_{ij}(t,x,v,p,A,u)= & \langle A\sigma_{i}(t,x,u),\sigma_{j}(t,x,u)\rangle
+2\langle p,h_{ij}(t,x,u)\rangle+2g_{ij}(t,x,v,\sigma(x,u)p,u),
\end{array}
\]
$(t,x,v,p,A,u)\in\lbrack0,T]\times\mathbb{R}^{n}\times\mathbb{R}%
\times\mathbb{R}^{d}\times\mathbb{S}_{n}\times U$ and $\sigma_{i}$ is the
$i$-th column of $\sigma$.

Notice that under the $G$-framework, there is no reference probability
measure. Thus our results generalizes the results in Peng \cite{peng-dpp} and
\cite{peng-dpp-1} which was only considered in the Wiener space (corresponding
to $G$ is linear in our paper). Under a family of non-dominated probability
measures, it is far from being trivial to prove that the value function $V$ is
wellposed and deterministic. Furthermore, the BSDE driven by $G$-Brownian
motion contains the decreasing $G$-martingale $K$, which is more difficult to
deal with.

The paper is organized as follows. In section 2, we present some fundamental
results on $G$-expectation theory and formulate our stochastic recursive
optimal control problem. We establish the dynamic programming principle in
section 3. In section 4, we first derive the HJB equation and prove that the
value function is the viscosity solution of the obtained HJB equation.

\section{Preliminaries}

We review some basic notions and results of $G$-expectation, the related
spaces of random variables and the backward stochastic differential equations
driven by a $G$-Browninan motion. The readers may refer to
\cite{HJPS,P07a,P07b,P08a,P10} for more details.

Let $\Omega$ be a given set and let $\mathcal{H}$ be a vector lattice of real
valued functions defined on $\Omega$, namely $c\in\mathcal{H}$ for each
constant $c$ and $|X|\in\mathcal{H}$ if $X\in\mathcal{H}$. $\mathcal{H}$ is
considered as the space of random variables.

\begin{definition}
\label{def2.1} A sublinear expectation $\mathbb{\hat{E}}:\mathcal{H}%
\rightarrow\mathbb{R}$ satisfying the following properties: for all
$X,Y\in\mathcal{H}$,

(i) Monotonicity: If $X\geq Y$ then $\mathbb{\hat{E}}[X]\geq\mathbb{\hat{E}%
}[Y]$;

(ii) Constant preservation: $\mathbb{\hat{E}}[c]=c$;

(iii) Sub-additivity: $\mathbb{\hat{E}}[X+Y]\leq\mathbb{\hat{E}}%
[X]+\mathbb{\hat{E}}[Y]$;

(iv) Positive homogeneity: $\mathbb{\hat{E}}[\lambda X]=\lambda\mathbb{\hat
{E}}[X]$ for each $\lambda\geq0$.

$(\Omega,\mathcal{H},\mathbb{\hat{E}})$ is called a sublinear expectation space.
\end{definition}

Let $X_{1}$ and $X_{2}$ be two $n$-dimensional random vectors defined in
sublinear expectation spaces $(\Omega_{1},\mathcal{H}_{1},\mathbb{\hat{E}}%
_{1})$ and $(\Omega_{2},\mathcal{H}_{2},\mathbb{\hat{E}}_{2})$ respectively$.$
We will denote by $C_{l.Lip}(\mathbb{R}^{n})$ the space of real continuous
functions defined on $\mathbb{R}^{n}$ such that
\[
|\varphi(x)-\varphi(y)|\leq C(1+|x|^{k}+|y|^{k})|x-y|\ \text{\ for
all}\ x,y\in\mathbb{R}^{n},
\]
where $k$ and $C$ depend only on $\varphi$.

\begin{definition}
\label{def2.2} We call $X_{1}$ and $X_{2}$\ identically distributed, denoted
by $X_{1}\overset{d}{=}X_{2}$,

if for all $\varphi\in C_{l.Lip}(\mathbb{R}^{n}),$
\[
\mathbb{\hat{E}}_{1}[\varphi(X_{1})]=\mathbb{\hat{E}}_{2}[\varphi(X_{2})].
\]
\ \
\end{definition}

\begin{definition}
\label{def2.3} For given $(\Omega,\mathcal{H},\mathbb{\hat{E}}),$ random
vectors $Y=(Y_{1},\cdots,Y_{n})$ and $X=(X_{1},\cdots,X_{m})$, $Y_{i}$,
$X_{i}\in\mathcal{H}$. We call $Y$ is independent of $X$ under $\mathbb{\hat
{E}}[\cdot]$, denoted by $Y\bot X$, if for every test function $\varphi\in
C_{l.Lip}(\mathbb{R}^{m}\times\mathbb{R}^{n})$ we have%
\[
\mathbb{\hat{E}}[\varphi(X,Y)]=\mathbb{\hat{E}}[\mathbb{\hat{E}}%
[\varphi(x,Y)]_{x=X}].
\]

\end{definition}

\begin{definition}
\label{def2.4} ($G$-normal distribution) For given $(\Omega,\mathcal{H}%
,\mathbb{\hat{E}})$ and $X=(X_{1},\cdots,X_{d}).$ $X$\ is called $G$-normally
distributed if for each $a,b\geq0,$ we have
\[
aX+b\bar{X}\overset{d}{=}\sqrt{a^{2}+b^{2}}X,
\]
where $\bar{X}$ is an independent copy of $X$, i.e., $\bar{X}\overset{d}{=}X$
and $\bar{X}\bot X$.
\end{definition}

For each $\varphi\in C_{l.Lip}(\mathbb{R}^{d})$, we define%

\[
u(t,x):=\mathbb{\hat{E}}[\varphi(x+\sqrt{t}X)],\text{ \ \ }(t,x)\in
\lbrack0,\infty)\times\mathbb{R}^{d}.
\]
Peng \cite{P10} proved that $X$ is $G$-normally distributed if and only if $u$
is the solution of the following $G$-heat equation:%
\[
\partial_{t}u-G(D_{xx}^{2}u)=0,\ u(0,x)=\varphi(x)
\]
where $G$ denotes the function
\[
G(A):=\frac{1}{2}\mathbb{\hat{E}}[\langle AX,X\rangle]:\mathbb{S}%
_{d}\rightarrow\mathbb{R}.
\]
The function $G(\cdot):\mathbb{S}_{d}\rightarrow\mathbb{R}$ is a monotonic,
sublinear mapping on $\mathbb{S}_{d},$ where $\mathbb{S}_{d}$ denotes the
collection of $d\times d$ symmetric matrices. There exists a bounded and
closed subset $\Gamma\subset$$\mathbb{R}$$^{d\times d}$ such that
\begin{equation}
G(A)=\frac{1}{2}\sup_{\gamma\in\Gamma}\text{\textrm{tr}}[\gamma\gamma^{T}A],
\label{Gequation}%
\end{equation}
where $\mathbb{R}$$^{d\times d}$ denotes the collection of $d\times d$ matrices.

In this paper we only consider non-degenerate $G$-normal distribution, i.e.,
there exists some $\underline{\sigma}^{2}>0$ such that $G(A)-G(B)\geq
\underline{\sigma}^{2}\mathrm{tr}[A-B]$ for any $A\geq B$.

Let $\Omega=C_{0}([0,\infty);\mathbb{R}^{d})$ be the space of real valued
continuous functions on $[0,\infty)$ with $\omega_{0}=0$ and let $B_{t}%
(\omega)=\omega_{t}$ be the canonical process. Set%
\[
L_{ip}(\Omega):=\{\varphi(B_{t_{1}},\cdots,B_{t_{n}}):n\geq1,t_{1}%
,\cdots,t_{n}\in\lbrack0,\infty),\varphi\in C_{l.Lip}(\mathbb{R}^{d\times
n})\}.
\]

Let $\{\xi_{n}:n\geq1\}$ be a sequence of identically distributed
$d$-dimensional $G$-normally distributed random vectors in a sublinear
expectation space $(\tilde{\Omega},\tilde{\mathcal{H}},\mathbb{\tilde{E}})$
such that $\xi_{i+1}$ is independent of $(\xi_{1},\cdot\cdot\cdot,\xi_{i})$
for every $i\geq1$.

\begin{definition}
\label{def2.5}For each $X=\varphi(B_{t_{1}}-B_{t_{0}},B_{t_{2}}-B_{t_{1}%
},\cdots,B_{t_{m}}-B_{t_{m-1}})\in L_{ip}(\Omega)$ with $0\leq t_{0}%
<\cdots<t_{m}$, the $G$-expectation of $X$ is defined by
\[
\mathbb{\hat{E}}[X]=\mathbb{\tilde{E}}[\varphi(\sqrt{t_{1}-t_{0}}\xi
_{1},\cdots,\sqrt{t_{m}-t_{m-1}}\xi_{m})].
\]
The conditional $G$-expectation $\mathbb{\hat{E}}_{t}$ of $X$ with $t=t_{i}$
is defined by
\[
\mathbb{\hat{E}}_{t_{i}}[\varphi(B_{t_{1}}-B_{t_{0}},B_{t_{2}}-B_{t_{1}}%
,\cdot\cdot\cdot,B_{t_{m}}-B_{t_{m-1}})]
\]%
\[
=\tilde{\varphi}(B_{t_{1}}-B_{t_{0}},B_{t_{2}}-B_{t_{1}},\cdot\cdot
\cdot,B_{t_{i}}-B_{t_{i-1}}),
\]
where
\[
\tilde{\varphi}(x_{1},\cdot\cdot\cdot,x_{i})=\mathbb{\hat{E}}[\varphi
(x_{1},\cdot\cdot\cdot,x_{i},B_{t_{i+1}}-B_{t_{i}},\cdot\cdot\cdot,B_{t_{m}%
}-B_{t_{m-1}})].
\]
$(\Omega,L_{ip}(\Omega),\mathbb{\hat{E}})$ is called a $G$-expectation space.
The corresponding canonical process $(B_{t})_{t\geq0}$ is called a
$G$-Brownian motion.
\end{definition}

We denote by $L_{G}^{p}(\Omega)$ the completion of $L_{ip}(\Omega)$ under the
norm $\Vert X\Vert_{p,G}=(\mathbb{\hat{E}}[|X|^{p}])^{1/p}$ for $p\geq1$. For
each$\ t\geq0$, $\mathbb{\hat{E}}_{t}[\cdot]$ can be extended continuously to
$L_{G}^{1}(\Omega)$ under the norm $\Vert\cdot\Vert_{1,G}$. For each fixed
$T>0$, set%
\[
L_{ip}(\Omega_{T}):=\{\varphi(B_{t_{1}},\cdots,B_{t_{n}}):n\geq1,t_{1}%
,\cdots,t_{n}\in\lbrack0,T],\varphi\in C_{l.Lip}(\mathbb{R}^{d\times n})\}.
\]
Obviously, $L_{ip}(\Omega_{T})\subset L_{ip}(\Omega)$, then we can similarly
define $L_{G}^{p}(\Omega_{T})$ for $p\geq1$.

\begin{definition}
\label{def2.6} Let $M_{G}^{0}(0,T)$ be the collection of processes in the
following form: for a given partition $\{t_{0},\cdot\cdot\cdot,t_{N}\}=\pi
_{T}$ of $[0,T]$,
\[
\eta_{t}(\omega)=\sum_{j=0}^{N-1}\xi_{j}(\omega)I_{[t_{j},t_{j+1})}(t),
\]
where $\xi_{i}\in L_{ip}(\Omega_{t_{i}})$, $i=0,1,2,\cdot\cdot\cdot,N-1$.
\end{definition}

We denote by $M_{G}^{p}(0,T)$ the completion of $M_{G}^{0}(0,T)$ under the
norm $\Vert\eta\Vert_{M_{G}^{p}}=\{\mathbb{\hat{E}}[\int_{0}^{T}|\eta_{s}%
|^{p}ds]\}^{1/p}$ for $p\geq1$.

\begin{theorem}
\label{the2.7} (\cite{DHP11,HP09}) There exists a family of weakly compact
probability measures $\mathcal{P}$ on $(\Omega,\mathcal{B}(\Omega))$ such
that
\[
\mathbb{\hat{E}}[\xi]=\sup_{P\in\mathcal{P}}E_{P}[\xi]\ \ \text{for
\ all}\ \xi\in L_{G}^{1}(\Omega).
\]
$\mathcal{P}$ is called a set that represents $\mathbb{\hat{E}}$.
\end{theorem}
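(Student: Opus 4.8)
The plan is to exhibit $\mathbb{\hat{E}}$ as an upper expectation over a family of genuine (countably additive) probability measures, proceeding in three stages: first a Hahn--Banach representation by linear functionals, then an upgrade of these functionals to measures, and finally the weak compactness of the resulting family. Since $\mathbb{\hat{E}}$ is subadditive and positively homogeneous by (iii)--(iv) of Definition \ref{def2.1}, it is a sublinear functional on the vector space $L_G^1(\Omega)$. By the Hahn--Banach theorem the collection $\mathcal{L}$ of linear functionals $\ell$ dominated by $\mathbb{\hat{E}}$, i.e. $\ell(\xi)\le\mathbb{\hat{E}}[\xi]$ for all $\xi$, is nonempty, and for each fixed $\xi$ some $\ell\in\mathcal{L}$ attains equality; hence $\mathbb{\hat{E}}[\xi]=\sup_{\ell\in\mathcal{L}}\ell(\xi)$.

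Next I would show every $\ell\in\mathcal{L}$ is represented by a Borel probability measure. From $\ell\le\mathbb{\hat{E}}$ together with constant preservation $\mathbb{\hat{E}}[\pm c]=\pm c$ one gets $\ell(c)=c$, and from monotonicity $\xi\ge0\Rightarrow\mathbb{\hat{E}}[-\xi]\le0\Rightarrow\ell(\xi)\ge0$, so each $\ell$ is a positive, normalized (finitely additive) linear functional. To promote finite to countable additivity I would invoke the regularity of $\mathbb{\hat{E}}$, i.e. continuity from above: if $\xi_n\in C_b(\Omega)$ and $\xi_n\downarrow0$ pointwise then $\mathbb{\hat{E}}[\xi_n]\downarrow0$. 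Granting this, $0\le\ell(\xi_n)\le\mathbb{\hat{E}}[\xi_n]\downarrow0$ verifies the Daniell continuity hypothesis, so by the Daniell--Stone/Riesz representation theorem each $\ell$ restricted to $C_b(\Omega)$ corresponds to a countably additive probability measure $P_\ell$ on $(\Omega,\mathcal{B}(\Omega))$. Setting $\mathcal{P}=\{P_\ell:\ell\in\mathcal{L}\}$ yields $\mathbb{\hat{E}}[\xi]=\sup_{P\in\mathcal{P}}E_P[\xi]$ on $C_b(\Omega)$, and the representation extends to all $\xi\in L_G^1(\Omega)$ by truncation together with the density of $L_{ip}(\Omega)$ and the $\|\cdot\|_{1,G}$-continuity of both sides.

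For weak compactness I would show $\mathcal{P}$ is tight and weakly closed. Because $\Gamma$ in \eqref{Gequation} is bounded, the increments of the canonical process carry uniformly bounded moments under every $P\in\mathcal{P}$; Kolmogorov's continuity criterion then furnishes a uniform control of the modulus of continuity, so the laws are tight on $C_0([0,\infty);\mathbb{R}^d)$ and, by Prokhorov's theorem, relatively weakly compact. Closedness is checked by passing the inequality $E_P[\varphi]\le\mathbb{\hat{E}}[\varphi]$ for $\varphi\in C_b(\Omega)$ to weak limits, after which the weak closure of $\mathcal{P}$ still represents $\mathbb{\hat{E}}$.

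The main obstacle is the regularity (continuity from above) step, which does not hold for a general sublinear expectation and must exploit the concrete construction of $\mathbb{\hat{E}}$. I would establish it, and simultaneously make the representation explicit, by realizing the candidate measures as laws of stochastic integrals: for adapted $\theta$ valued in $\Gamma$ let $P_\theta$ be the law of $\int_0^\cdot\theta_s\,dW_s$ under a classical Brownian motion $W$. The stochastic-control representation of the viscosity solution of the $G$-heat equation $\partial_t u-G(D_{xx}^2u)=0$ identifies $\mathbb{\hat{E}}[\varphi(B_{t_1},\dots,B_{t_m})]$ with $\sup_\theta E_{P_\theta}[\varphi(\cdots)]$ on $L_{ip}(\Omega)$, and the boundedness of $\Gamma$ delivers the uniform estimates needed for both continuity from above and tightness.
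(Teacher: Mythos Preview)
The paper does not supply its own proof of this theorem: it is quoted from \cite{DHP11,HP09}, and the only content added in the present paper is the explicit description, immediately following the statement, of $\mathcal{P}$ as the weak closure of $\mathcal{P}_M=\{P_\theta:P_\theta=P^0\circ(\int_0^\cdot\theta_s\,dW_s)^{-1},\ \theta\in L^2_{F^0}([0,T];\Gamma)\}$. Your outline --- Hahn--Banach to produce the dominated linear functionals, regularity (continuity from above) of $\mathbb{\hat E}$ on $C_b(\Omega)$ to upgrade them to countably additive measures via Daniell--Stone, and tightness/Prokhorov from the boundedness of $\Gamma$ --- is exactly the strategy carried out in \cite{DHP11}, and your final paragraph reproduces the concrete family $\{P_\theta\}$ that the paper records. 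So your proposal is correct and coincides with the cited argument; there is nothing further in the present paper to compare against.
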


Let $\{W_{t}\}$ be a classical $d$-dimensional Brownian motion on a
probability space $(\Omega^{0},\mathcal{F}^{0},P^{0})$ and let$\ F^{0}%
=\{\mathcal{F}_{t}^{0}\}$ be the augmented filtration generated by $W$. Set
\[
\mathcal{P}_{M}:=\{P_{\theta}:P_{\theta}=P^{0}\circ(B_{t}^{\theta,0}%
)^{-1},B_{t}^{\theta,0}=\int_{0}^{t}\theta_{s}dW_{s},\theta\in L_{F^{0}}%
^{2}([0,T];\Gamma)\},
\]
where $L_{F^{0}}^{2}([0,T];\Gamma)$ is the collection of $F^{0}$-adapted
square integrable measurable processes with values in $\Gamma$. Set
$\mathcal{P=}\overline{\mathcal{P}_{M}}\mathcal{\ }$the closure of
$\mathcal{P}_{M}$ under the topology of weak convergence, then $\mathcal{P}$
is weakly compact. \cite{DHP11} proved that $\mathcal{P}$ represents
$\mathbb{\hat{E}}$ on $L_{G}^{1}(\Omega_{T})$.

\begin{proposition}
\label{npro-2.8} (\cite{DHP11}) Let $\{P_{n}:n\geq1\}\subset\mathcal{P}$
converge weakly to $P$. Then for each $\xi\in L_{G}^{1}(\Omega)$, we have
$E_{P_{n}}[\xi]\rightarrow E_{P}[\xi]$.
\end{proposition}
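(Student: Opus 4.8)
The plan is to reduce the general case $\xi\in L_{G}^{1}(\Omega)$ to the case of cylinder functions in $L_{ip}(\Omega)$ by a density argument, and then to upgrade the weak convergence $P_{n}\Rightarrow P$ --- which a priori controls only bounded continuous test functions --- to functions of polynomial growth by means of a uniform integrability estimate supplied by the representation in Theorem \ref{the2.7}.

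First I would record two preliminary facts. Since $\mathcal{P}$ is weakly compact it is weakly closed, so the weak limit $P$ belongs to $\mathcal{P}$; consequently every measure involved, each $P_{n}$ as well as $P$, is dominated by $\mathbb{\hat{E}}$ in the sense that $E_{Q}[|\eta|]\leq\mathbb{\hat{E}}[|\eta|]$ for all $\eta\in L_{G}^{1}(\Omega)$ and all $Q\in\{P_{n}\}\cup\{P\}$. Moreover each $B_{t}$ lies in $L_{G}^{p}(\Omega)$ for every $p\geq1$, so $\sup_{Q\in\mathcal{P}}E_{Q}[|B_{t}|^{p}]=\mathbb{\hat{E}}[|B_{t}|^{p}]<\infty$, which furnishes uniform moment bounds over the whole family.

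The core step is the convergence for a fixed cylinder function $\xi=\varphi(B_{t_{1}},\cdots,B_{t_{m}})$ with $\varphi\in C_{l.Lip}(\mathbb{R}^{d\times m})$. The map $\omega\mapsto(B_{t_{1}}(\omega),\cdots,B_{t_{m}}(\omega))$ is continuous on $\Omega=C_{0}([0,\infty);\mathbb{R}^{d})$, so $\xi$ is continuous, but it has polynomial growth of some order $k$ and is therefore not a bounded test function. To handle this I would introduce a continuous cutoff $\psi_{R}$ with $0\leq\psi_{R}\leq1$, $\psi_{R}\equiv1$ on $\{|x|\leq R\}$ and $\psi_{R}\equiv0$ on $\{|x|\geq R+1\}$, and split $\xi=\xi\psi_{R}+\xi(1-\psi_{R})$. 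The first summand is bounded and continuous, so weak convergence gives $E_{P_{n}}[\xi\psi_{R}]\to E_{P}[\xi\psi_{R}]$ for each fixed $R$. For the remainder, writing $\mathbf{B}=(B_{t_{1}},\cdots,B_{t_{m}})$, on the support of $1-\psi_{R}$ we have $|\mathbf{B}|\geq R$, hence $|\xi|(1-\psi_{R})\leq C(1+|\mathbf{B}|^{k})\mathbf{1}_{\{|\mathbf{B}|\geq R\}}$; the uniform bound $\sup_{Q}E_{Q}[|\mathbf{B}|^{k+1}]<\infty$ makes the family $\{1+|\mathbf{B}|^{k}\}$ uniformly integrable across $\{P_{n}\}\cup\{P\}$, so $\sup_{Q}E_{Q}[|\xi|(1-\psi_{R})]\to0$ as $R\to\infty$. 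A standard $3\varepsilon$ argument --- choosing $R$ large to kill the two tail terms uniformly, then letting $n\to\infty$ for that fixed $R$ --- yields $E_{P_{n}}[\xi]\to E_{P}[\xi]$ for cylinder $\xi$.

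Finally I would pass to general $\xi\in L_{G}^{1}(\Omega)$ by density: choose $\xi_{k}\in L_{ip}(\Omega)$ with $\Vert\xi-\xi_{k}\Vert_{1,G}=\mathbb{\hat{E}}[|\xi-\xi_{k}|]\to0$. Since each $Q\in\{P_{n}\}\cup\{P\}$ is dominated by $\mathbb{\hat{E}}$, we have $|E_{Q}[\xi]-E_{Q}[\xi_{k}]|\leq\mathbb{\hat{E}}[|\xi-\xi_{k}|]$ uniformly in $Q$, and a second $3\varepsilon$ argument combining this uniform approximation with the cylinder-function convergence of the previous step completes the proof. I expect the main obstacle to be exactly the previous paragraph: controlling the unbounded, polynomially growing test functions under mere weak convergence, which is precisely where the uniform integrability delivered by the $G$-expectation representation (Theorem \ref{the2.7}) is indispensable.
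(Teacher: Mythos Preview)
The paper does not give its own proof of this proposition: it is quoted from \cite{DHP11} as a preliminary result, with no argument supplied. So there is nothing in the present paper to compare your proposal against.

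That said, your argument is correct and is close in spirit to the proof in \cite{DHP11}. The one mild difference is organizational: \cite{DHP11} proceeds by first characterizing $L_{G}^{1}(\Omega)$ as the quasi-continuous elements $X$ of $\mathbb{L}^{1}(\Omega)$ satisfying $\mathbb{\hat{E}}[|X|\mathbf{1}_{\{|X|>N\}}]\to0$, and then uses this intrinsic uniform integrability together with a Lusin-type approximation of bounded quasi-continuous functions by bounded continuous ones. You instead bypass quasi-continuity entirely by reducing via $\Vert\cdot\Vert_{1,G}$-density to cylinder functions, which are already continuous on $\Omega$, and then handle the polynomial growth with a finite-dimensional cutoff and the uniform moment bound $\sup_{Q\in\mathcal{P}}E_{Q}[|B_{t}|^{p}]<\infty$. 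Your route is slightly more elementary in that it stays inside $L_{ip}(\Omega)$ and avoids the capacity-theoretic characterization of $L_{G}^{1}$; the \cite{DHP11} route is more intrinsic and would apply verbatim to any $\xi\in L_{G}^{1}$ without passing through cylinder approximants. Both deliver the same conclusion with the same essential mechanism, namely that the representation $\mathbb{\hat{E}}=\sup_{Q\in\mathcal{P}}E_{Q}$ upgrades weak convergence to convergence against the unbounded test functions in $L_{G}^{1}$.
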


For this $\mathcal{P}$, we define capacity%
\[
c(A):=\sup_{P\in\mathcal{P}}P(A),\ A\in\mathcal{B}(\Omega).
\]
A set $A\in\mathcal{B}(\Omega)$ is polar if $c(A)=0$. A property holds
\textquotedblleft quasi-surely\textquotedblright\ (q.s. for short) if it holds
outside a polar set. In the following, we do not distinguish two random
variables $X$ and $Y$ if $X=Y$ q.s.. We set%
\[
\mathbb{L}^{p}(\Omega_{T}):=\{X\in\mathcal{B}(\Omega_{T}):\sup_{P\in
\mathcal{P}}E_{P}[|X|^{p}]<\infty\}\ \text{for}\ p\geq1.
\]
It is important to note that $L_{G}^{p}(\Omega_{T})\subset\mathbb{L}%
^{p}(\Omega_{T})$.

\subsection{Forward and backward SDEs driven by $G$-Brownian motion}

We first give the definition of admissible controls.

\begin{definition}
For each $t\in\lbrack0,T],$ $u$ is said to be an admissible control on
$[t,T]$, if it satisfies the following conditions:

(i) $u:[t,T]\times\Omega\rightarrow U$ where $U$ is a compact set of
$\mathbb{R}^{m}$;

(ii) $u\in M_{G}^{2}(t,T;\mathbb{R}^{m})$.
\end{definition}

The set of admissible controls on $[t,T]$ is denoted by $\mathcal{U}[t,T]$.

In the rest of this paper, we use Einstein summation convention.

Let $t\in\lbrack0,T]$, $\varepsilon>0$, $\xi\in L_{G}^{2+\varepsilon}%
(\Omega_{t})$ and $u\in\mathcal{U}[t,T]$. Consider the following forward and
backward SDEs driven by $G$-Brownian motion:
\begin{align}
dX_{s}^{t,\xi,u}  &  =b(s,X_{s}^{t,\xi,u},u_{s})ds+h_{ij}(s,X_{s}^{t,\xi
,u},u_{s})d\langle B^{i},B^{j}\rangle_{s}+\sigma(s,X_{s}^{t,\xi,u}%
,u_{s})dB_{s},\label{state-1}\\
X_{t}^{t,\xi,u}  &  =\xi,\nonumber
\end{align}

and%
\begin{equation}%
\begin{array}
[c]{l}%
-dY_{s}^{t,\xi,u}=f(s,X_{s}^{t,\xi,u},Y_{s}^{t,\xi,u},Z_{s}^{t,\xi,u}%
,u_{s})ds+g_{ij}(s,X_{s}^{t,\xi,u},Y_{s}^{t,\xi,u},Z_{s}^{t,\xi,u}%
,u_{s})d\langle B^{i},B^{j}\rangle_{s}-Z_{s}^{t,\xi,u}dB_{s}-dK_{s}^{t,\xi
,u},\\
Y_{T}^{t,\xi,u}=\Phi(X_{T}^{t,\xi,u}),\text{ \ \ \ }s\in\lbrack t,T]\text{,}%
\end{array}
\label{state-2}%
\end{equation}
where
\[%
\begin{array}
[c]{l}%
b:[t,T]\times\mathbb{R}^{n}\times U\rightarrow\mathbb{R}^{n}\text{;}\\
h_{ij}:[t,T]\times\mathbb{R}^{n}\times U\rightarrow\mathbb{R}^{n}\text{;}\\
\sigma:[t,T]\times\mathbb{R}^{n}\times U\rightarrow\mathbb{R}^{n\times
d}\text{;}\\
f:[t,T]\times\mathbb{R}^{n}\times\mathbb{R}\times\mathbb{R}^{d}\times
U\rightarrow\mathbb{R}\text{;}\\
g_{ij}:[t,T]\times\mathbb{R}^{n}\times\mathbb{R}\times\mathbb{R}^{d}\times
U\rightarrow\mathbb{R}\text{;}\\
\Phi:\mathbb{R}^{n}\rightarrow\mathbb{R}.
\end{array}
\]

Denote%
\[%
\begin{array}
[c]{l}%
S_{G}^{0}(0,T)=\{h(t,B_{t_{1}\wedge t},\cdot\cdot\cdot,B_{t_{n}\wedge
t}):t_{1},\ldots,t_{n}\in\lbrack0,T],h\in C_{b,Lip}(\mathbb{R}^{n+1})\};\\
S_{G}^{2}(0,T)=\{\text{the completion of }S_{G}^{0}(0,T)\text{ under the norm
}\Vert\eta\Vert_{S_{G}^{2}}=\{\mathbb{\hat{E}}[\sup_{t\in\lbrack0,T]}|\eta
_{t}|^{2}]\}^{\frac{1}{2}}\}.
\end{array}
\]

For given $t,$ $u$ and $\xi$, $(X^{t,\xi,u})$ and $(Y^{t,\xi,u},Z^{t,\xi
,u},K^{t,\xi,u})$ are called solutions of the above forward and backward SDEs
respectively if $(X^{t,\xi,u})\in M_{G}^{2}(t,T;\mathbb{R}^{n})$;
$(Y^{t,\xi,u},Z^{t,\xi,u})\in S_{G}^{2}(0,T)\times M_{G}^{2}(0,T)$;
$K^{t,\xi,u}$ is a decreasing $G$-martingale$\ $with $K_{t}^{t,\xi,u}=0,$
$K_{T}^{t,\xi,u}\in L_{G}^{2}(\Omega_{T});$ (\ref{state-1}) and (\ref{state-2}%
) are satisfied respectively.

Assume $b,h_{ij},\sigma,f,g_{ij},\Phi$ are deterministic functions and
satisfying the following conditions:

\begin{assumption}
\label{assu-1}There exists a constant $c>0$ such that%
\[%
\begin{array}
[c]{cl}
& \mid b(s,x^{1},u)-b(s,x^{2},v)\mid+\mid h_{ij}(s,x^{1},u)-h_{ij}%
(s,x^{2},v)\mid+\mid\sigma(s,x^{1},u)-\sigma(s,x^{2},v)\mid\\
\leq & c(\mid x^{1}-x^{2}\mid+\mid u-v\mid),\text{ }\forall(s,x^{1}%
,u),(s,x^{2},v)\in\lbrack t,T]\times\mathbb{R}^{n}\times U
\end{array}
\]
and $b,h_{ij},\sigma$\ are continuous\ about $t$.
\end{assumption}

\begin{assumption}
\label{assu-2}There exists a constant $c>0$ such that%
\[%
\begin{array}
[c]{l}%
\mid f(s,x^{1},y^{1},z^{1},u)-f(s,x^{2},y^{2},z^{2},v)\mid\leq c(\mid
x^{1}-x^{2}\mid+\mid y^{1}-y^{2}\mid+\mid z^{1}-z^{2}\mid+|u-v|);\\
\mid g_{ij}(s,x^{1},y^{1},z^{1},u)-g_{ij}(s,x^{2},y^{2},z^{2},v)\mid\leq
c(\mid x^{1}-x^{2}\mid+\mid y^{1}-y^{2}\mid+\mid z^{1}-z^{2}\mid+|u-v|);\\
\mid\Phi(x^{1})-\Phi(x^{2})\mid\leq c\mid x^{1}-x^{2}\mid,\\
\forall(s,x^{1},y^{1},z^{1},u),(s,x^{2},y^{2},z^{2},v)\in\lbrack
t,T]\times\mathbb{R}^{n}\times\mathbb{R}\times\mathbb{R}^{d}\times U
\end{array}
\]
and $f,g_{ij}$\ are continuous about $t$.\
\end{assumption}

\begin{remark}
Suppose Assumptions (\ref{assu-1}) and (\ref{assu-2}) hold. Then there exists
a constant $K>0$ such that%
\[%
\begin{array}
[c]{l}%
\mid b(s,x,u)\mid+\mid h_{ij}(s,x,u)\mid+\mid\sigma(s,x,u)\mid\leq K(1+\mid
x\mid);\\
\mid f(s,x,0,0,u)\mid+\mid g_{ij}(s,x,0,0,u)\mid\leq c(1+\mid x\mid);\\
\mid\Phi(x)\mid\leq K(1+\mid x\mid),\text{ \ \ }\forall(s,x,u)\in\lbrack
t,T]\times\mathbb{R}^{n}\times U.
\end{array}
\]

\end{remark}

We have the following theorems.

\begin{theorem}
(\cite{P10}) Let Assumption \ref{assu-1} hold. Then there exists a unique
adapted solution $X$ for equation (\ref{state-1}).
\end{theorem}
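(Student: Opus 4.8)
The plan is to proceed by a standard Picard fixed-point argument, carried out in the Banach space $M_{G}^{2}(t,T;\mathbb{R}^{n})$, with the $G$-framework analogues of the Burkholder--Davis--Gundy and Gronwall estimates replacing their classical counterparts. First I would fix the admissible control $u\in\mathcal{U}[t,T]$ and the initial datum $\xi\in L_{G}^{2+\varepsilon}(\Omega_{t})$, then rewrite (\ref{state-1}) in integral form and define a map $\Lambda$ on $M_{G}^{2}(t,T;\mathbb{R}^{n})$ by
\[
\Lambda(X)_{s}=\xi+\int_{t}^{s}b(r,X_{r},u_{r})\,dr+\int_{t}^{s}h_{ij}(r,X_{r},u_{r})\,d\langle B^{i},B^{j}\rangle_{r}+\int_{t}^{s}\sigma(r,X_{r},u_{r})\,dB_{r}.
\]
A fixed point of $\Lambda$ is precisely a solution of (\ref{state-1}), so it suffices to show that $\Lambda$ is a well-defined strict contraction with respect to a suitable (possibly exponentially weighted) norm, after which the Banach fixed-point theorem yields both existence and uniqueness.

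Next I would verify the two required ingredients. For well-definedness, the linear growth bounds for $b,h_{ij},\sigma$ (which follow from Assumption \ref{assu-1} together with the compactness of $U$), combined with $\xi\in L_{G}^{2+\varepsilon}(\Omega_{t})$ and a $G$-It\^o estimate of the form $\hat{\mathbb{E}}[\sup_{s}|\int_{t}^{s}\sigma\,dB_{r}|^{2}]\le C\,\hat{\mathbb{E}}[\int_{t}^{T}|\sigma(r,X_{r},u_{r})|^{2}\,dr]$, show that $\Lambda(X)\in M_{G}^{2}$ whenever $X\in M_{G}^{2}$; the finite-variation term $\int h_{ij}\,d\langle B^{i},B^{j}\rangle$ is controlled analogously, using that $\langle B^{i},B^{j}\rangle$ is absolutely continuous with density bounded (since $\Gamma$ is bounded and closed). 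For the contraction property, the Lipschitz bounds in Assumption \ref{assu-1} give, for $X,X'\in M_{G}^{2}$,
\[
\hat{\mathbb{E}}\Big[\sup_{r\le s}|\Lambda(X)_{r}-\Lambda(X')_{r}|^{2}\Big]\le C\int_{t}^{s}\hat{\mathbb{E}}[|X_{r}-X'_{r}|^{2}]\,dr,
\]
after which either a Gronwall iteration over short time subintervals or the insertion of a weight $e^{\beta r}$ with $\beta$ large renders $\Lambda$ a strict contraction.

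The main obstacle is not the algebraic structure of the iteration, which mirrors the classical It\^o case, but rather securing the a priori $G$-stochastic-integral estimates in the sublinear setting, where there is no single dominating measure and $\hat{\mathbb{E}}$ is a supremum over the family $\mathcal{P}$. The crucial tools are the $G$-BDG inequality for $\int\sigma\,dB$ and the control of $\int h_{ij}\,d\langle B^{i},B^{j}\rangle$ via the bounded density of the mutual variation; both are available in Peng's $G$-calculus (the theorem is attributed to \cite{P10}), so the displayed contraction estimate follows by combining them with the uniform Lipschitz constants. A secondary technical point is confirming that the fixed point inherits the correct integrability and adaptedness: adaptedness is preserved because each integral operation sends $M_{G}^{2}$-processes to adapted processes, and the slightly higher integrability of $\xi$ ensures that the iterates, and hence the limit, lie in $M_{G}^{2}$ uniformly throughout the iteration.
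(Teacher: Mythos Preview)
Your proposal is correct and outlines exactly the standard Picard/contraction argument in $M_{G}^{2}$ that is used in \cite{P10} to establish this result. Note, however, that the present paper does not give its own proof of this theorem at all: the statement is simply quoted from \cite{P10}, so there is nothing in the paper to compare against beyond the citation.
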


\begin{theorem}
(\cite{HJPS1}) Let Assumption \ref{assu-2} hold. Then there exists a unique
adapted solution $(Y,Z,K)$ for equation (\ref{state-2}).
\end{theorem}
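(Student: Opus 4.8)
Since the forward equation \eqref{state-1} has already been solved by the preceding theorem, I would regard $X^{t,\xi,u}$ as a fixed process in $M_G^2(t,T;\mathbb{R}^n)$ and absorb it into the coefficients, writing $\tilde f(s,y,z):=f(s,X_s^{t,\xi,u},y,z,u_s)$ and $\tilde g_{ij}(s,y,z):=g_{ij}(s,X_s^{t,\xi,u},y,z,u_s)$. By Assumption \ref{assu-2} and the growth bounds in the Remark, these are random generators, Lipschitz in $(y,z)$ uniformly in $(s,\omega)$, with $\tilde f(\cdot,0,0),\tilde g_{ij}(\cdot,0,0)\in M_G^2(t,T)$, while the terminal value $\Phi(X_T^{t,\xi,u})$ lies in $L_G^2(\Omega_T)$. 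The problem therefore reduces to the canonical existence-and-uniqueness result for a Lipschitz BSDE driven by $G$-Brownian motion, which is the content of \cite{HJPS1}.

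The plan is to proceed in two stages. First I would treat the decoupled case in which $\tilde f,\tilde g_{ij}$ do not depend on $(y,z)$. Taking the candidate
$$Y_s=\mathbb{\hat{E}}_s\Big[\Phi(X_T^{t,\xi,u})+\int_s^T\tilde f(r)\,dr+\int_s^T\tilde g_{ij}(r)\,d\langle B^i,B^j\rangle_r\Big],$$
the process $\tilde Y_s:=Y_s+\int_t^s \tilde f(r)\,dr+\int_t^s \tilde g_{ij}(r)\,d\langle B^i,B^j\rangle_r$ equals $\mathbb{\hat{E}}_s[\,\cdot\,]$ of a fixed terminal variable and is hence a symmetric $G$-martingale. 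The integrand $Z$ and the decreasing $G$-martingale $K$ are then produced by the nonlinear martingale decomposition theorem for $G$-martingales (see \cite{PSZ2012,Song11}), which yields $\tilde Y_s=\tilde Y_t+\int_t^s Z_r\,dB_r+K_s$ with $K$ decreasing and $K_t=0$. Rearranging recovers \eqref{state-2} in this special case and places $(Y,Z,K)$ in $S_G^2(0,T)\times M_G^2(0,T)\times\{\text{decreasing }G\text{-martingales}\}$.

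Second, for the fully coupled equation I would run a Banach fixed-point argument. On $M_G^2(t,T)\times M_G^2(t,T)$ with the weighted norm $\|(Y,Z)\|_\beta^2=\mathbb{\hat{E}}[\int_t^T e^{\beta s}(|Y_s|^2+|Z_s|^2)\,ds]$, define $\Gamma(Y,Z)=(\bar Y,\bar Z)$ by solving the decoupled BSDE of the first stage with generators $\tilde f(s,Y_s,Z_s)$ and $\tilde g_{ij}(s,Y_s,Z_s)$. The core estimate comes from applying It\^{o}'s formula for $G$-Brownian motion to $e^{\beta s}|\bar Y^1_s-\bar Y^2_s|^2$ for two inputs $(Y^1,Z^1),(Y^2,Z^2)$, using the Lipschitz continuity of $\tilde f,\tilde g_{ij}$ and the non-degeneracy $G(A)-G(B)\ge\underline{\sigma}^2\,\mathrm{tr}[A-B]$ to dominate the quadratic-variation terms, and then choosing $\beta$ large so that $\Gamma$ is a contraction; its unique fixed point is the desired solution, and the same a priori estimate yields uniqueness.

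The hard part, and where the $G$-framework departs sharply from the classical theory, is the control of the decreasing $G$-martingale $K$. In the Wiener-space setting the martingale representation theorem supplies $Z$ directly and there is no $K$ term; here one must instead invoke the non-symmetric $G$-martingale decomposition and then carry out every a priori estimate so that the sign of the $dK$ term is handled correctly. Concretely, when applying It\^{o}'s formula one cannot simply take $\mathbb{\hat{E}}$ of both sides, since $\mathbb{\hat{E}}$ is only sublinear and $\int Z\,dB$ has vanishing but not sign-definite expectation; the computation must be arranged so that $-\int e^{\beta s}\,dK_s$ enters with a favorable sign and so that the cross terms are absorbed using the bounds on $2\langle B^i,B^j\rangle$. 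Verifying the integrability $K_T\in L_G^2(\Omega_T)$ and the closedness of the solution spaces under the iteration are the remaining technical points.
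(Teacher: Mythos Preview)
The paper does not prove this statement at all; the theorem is simply quoted from \cite{HJPS1}, as the citation in the heading indicates, so there is no in-paper proof to compare your sketch against.

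For what it is worth, your outline is essentially the strategy carried out in \cite{HJPS1}: solve the decoupled case via conditional $G$-expectation and the $G$-martingale decomposition, then obtain the general case by a contraction in a weighted $M_G^2$ norm. Two small corrections. First, the process $\tilde Y_s=\mathbb{\hat E}_s[\,\cdot\,]$ is a $G$-martingale but \emph{not} a symmetric one; it is precisely the failure of symmetry that produces the nontrivial decreasing component $K$ in the decomposition. Second, the $G$-martingale representation results you cite require integrability strictly above $L_G^2$, which is why the present paper already assumes $\xi\in L_G^{2+\varepsilon}(\Omega_t)$; your claim that $\Phi(X_T^{t,\xi,u})\in L_G^2(\Omega_T)$ suffices should be upgraded to $L_G^{2+\varepsilon'}$ for the argument to go through.
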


\subsection{Stochastic optimal control problem}

The state equation of our stochastic optimal control problem is governed by
the above forward SDE (\ref{state-1}) and the objective functional is
introduced by the solution of the BSDE (\ref{state-2}) at time $t$. Let $\xi$
equals a constant $x\in\mathbb{R}^{n}$. When $u$ changes, $Y_{t}^{t,x,u}$ (the
solution $Y^{t,x,u}$ at time $t$) also changes. In order to study the value
function of our stochastic optimal control problem, we need to define the
essential supremum of $\{Y_{t}^{t,x,u}\mid u\in\mathcal{U}[t,T]\}.$

\begin{definition}
\label{esssup}The essential supremum of $\{Y_{t}^{t,x,u}\mid u\in
\mathcal{U}[t,T]\}$, denoted by $\underset{u(\cdot)\in\mathcal{U}%
[t,T]}{\text{ess}\sup}Y_{t}^{t,x,u}$, is a random variable $\zeta\in L_{G}%
^{2}(\Omega_{t})$ satisfying:

(i). $\forall u\in\mathcal{U}[t,T],$ $\zeta\geq Y_{t}^{t,x,u}$ $\ q.s.,$ and

(ii). if $\eta$ is a random variable satisfying $\eta\geq Y_{t}^{t,x,u}$
$\ q.s.$ for any $u\in\mathcal{U}[t,T]$, then $\zeta\leq\eta$ $\ q.s..$
\end{definition}

\begin{remark}
It is easy to verify that $c(A)=0$ if and only if $P(A)=0$ for each
$P\in\mathcal{P}$. Thus $\zeta\leq\eta$ $\ q.s.$ is equivalent to $\zeta
\leq\eta$ $P-a.s.$ for each $P\in\mathcal{P}$.
\end{remark}

\begin{proposition}
Let $\zeta$, $\eta\in L_{G}^{2}(\Omega)$. If $\zeta\leq\eta$ $P-a.s.$ for each
$P\in\mathcal{P}_{M}$, then $\zeta\leq\eta$ $\ q.s.$.
\end{proposition}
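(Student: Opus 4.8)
The plan is to reduce the quasi-sure inequality to a statement about every $P\in\mathcal{P}$ and then propagate the almost-sure inequality from the generating family $\mathcal{P}_{M}$ to its weak closure $\mathcal{P}$ by a continuity argument. By the Remark preceding this proposition, $\zeta\le\eta$ q.s. is equivalent to $\zeta\le\eta$ $P$-a.s. for every $P\in\mathcal{P}$. Writing $\xi:=(\zeta-\eta)^{+}$, it therefore suffices to show that $E_{P}[\xi]=0$ for every $P\in\mathcal{P}$, since $\xi\ge0$ and $E_{P}[\xi]=0$ force $\xi=0$ $P$-a.s.

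First I would check that $\xi\in L_{G}^{1}(\Omega)$, which is what makes Proposition \ref{npro-2.8} applicable. Since $\zeta,\eta\in L_{G}^{2}(\Omega)$ and $L_{G}^{2}(\Omega)$ is a vector space, $\zeta-\eta\in L_{G}^{2}(\Omega)$; let $\psi_{n}\in L_{ip}(\Omega)$ approximate $\zeta-\eta$ in $\Vert\cdot\Vert_{2,G}$. Because $x\mapsto x^{+}$ is $1$-Lipschitz, each $\psi_{n}^{+}$ again lies in $L_{ip}(\Omega)$ (the positive part of a $C_{l.Lip}$ function is $C_{l.Lip}$), and the bound $|a^{+}-b^{+}|\le|a-b|$ together with the monotonicity of $\mathbb{\hat{E}}$ gives $\Vert\psi_{n}^{+}-\xi\Vert_{2,G}\le\Vert\psi_{n}-(\zeta-\eta)\Vert_{2,G}\to0$. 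Hence $\xi\in L_{G}^{2}(\Omega)\subset L_{G}^{1}(\Omega)$.

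Next, fix an arbitrary $P\in\mathcal{P}=\overline{\mathcal{P}_{M}}$. Since $\Omega=C_{0}([0,\infty);\mathbb{R}^{d})$ is Polish, the weak topology on probability measures is metrizable, so the weak closure is sequential: there is a sequence $\{P_{n}\}\subset\mathcal{P}_{M}\subset\mathcal{P}$ with $P_{n}\to P$ weakly. By hypothesis $\zeta\le\eta$ $P_{n}$-a.s. for each $n$, hence $\xi=0$ $P_{n}$-a.s. and $E_{P_{n}}[\xi]=0$. Applying Proposition \ref{npro-2.8} to $\xi\in L_{G}^{1}(\Omega)$ yields
\[
E_{P}[\xi]=\lim_{n\to\infty}E_{P_{n}}[\xi]=0.
\]
As $P\in\mathcal{P}$ was arbitrary, $\xi=0$ $P$-a.s. for every $P\in\mathcal{P}$, which by the Remark is precisely $\zeta\le\eta$ q.s.

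The argument is short, and the only genuinely delicate points are the two structural facts it rests on. The main obstacle is verifying $(\zeta-\eta)^{+}\in L_{G}^{1}(\Omega)$, i.e. that the positive part (a Lipschitz nonlinearity) preserves the $L_{G}^{2}$-space; this is exactly what lets Proposition \ref{npro-2.8} convert weak convergence of the measures into convergence of the relevant expectations. The secondary point is that the weak closure defining $\mathcal{P}$ can be realized along sequences, which follows from the metrizability of the weak topology on the Polish path space. Everything else is bookkeeping.
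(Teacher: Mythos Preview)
Your proof is correct and follows essentially the same route as the paper: both arguments show $(\zeta-\eta)^{+}\in L_{G}^{2}(\Omega)$ and then use Proposition~\ref{npro-2.8} to pass from $E_{P}[(\zeta-\eta)^{+}]=0$ for $P\in\mathcal{P}_{M}$ to the same conclusion for all $P\in\mathcal{P}=\overline{\mathcal{P}_{M}}$. The paper compresses the last step into the single line $\mathbb{\hat{E}}[(\zeta-\eta)^{+}]=\sup_{P\in\mathcal{P}}E_{P}[(\zeta-\eta)^{+}]=\sup_{P\in\mathcal{P}_{M}}E_{P}[(\zeta-\eta)^{+}]=0$, whereas you unpack it by picking an approximating sequence $P_{n}\to P$; these are the same argument at different levels of detail.
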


\begin{proof}
It is easy to check that $(\zeta-\eta)^{+}\in L_{G}^{2}(\Omega)$. By
Proposition \ref{npro-2.8}, we obtain
\[
\mathbb{\hat{E}}[(\zeta-\eta)^{+}]=\sup_{P\in\mathcal{P}}E_{P}[(\zeta
-\eta)^{+}]=\sup_{P\in\mathcal{P}_{M}}E_{P}[(\zeta-\eta)^{+}]=0.
\]
Thus $\zeta\leq\eta$ $\ q.s.$.
\end{proof}

\begin{remark}
\label{ren1} From the above proposition, it is easy to deduce that $\zeta
\leq\eta$ $\ q.s.$ if and only if $\zeta\leq\eta$ $P-a.s.$ for each
$P\in\mathcal{P}_{M}$.
\end{remark}

Our stochastic optimal control problem is: for given $x\in\mathbb{R}^{n}$, to
find $u(\cdot)\in\mathcal{U}[t,T]$ so as to maximize the objective function
$Y_{t}^{t,x,u}$.

The value function $V$ is defined to be
\begin{equation}
V(t,x):=\underset{u(\cdot)\in\mathcal{U}[t,T]}{\text{ess}\sup}Y_{t}^{t,x,u}.
\label{valuefunction0}%
\end{equation}
Next we prove that $V(t,x)$ exists and is deterministic, and then we show that
it satisfies a kind of HJB equation.

For $x\in R^{n}$, $u(\cdot)\in\mathcal{U}[t,T]$ and $P\in\mathcal{P}_{M},$ we
consider the following forward and backward equation:%
\begin{align}
&  dX_{s}^{t,x,u;P}=b(s,X_{s}^{t,x,u;P},u_{s})ds+h_{ij}(s,X_{s}^{t,x,u;P}%
,u_{s})d\langle B^{i},B^{j}\rangle_{s}+\sigma(s,X_{s}^{t,x,u;P},u_{s}%
)dB_{s},\label{socp-1}\\
&  X_{t}^{t,x,u;P}=x,\ \ \ P-a.s.\nonumber
\end{align}

and%

\begin{equation}%
\begin{array}
[c]{l}%
dY_{s}^{t,x,u;P}=-f(s,X_{s}^{t,x,u;P},Y_{s}^{t,x,u;P},Z_{s}^{t,x,u;P}%
,u_{s})ds-g_{ij}(s,X_{s}^{t,x,u;P},Y_{s}^{t,x,u;P},Z_{s}^{t,x,u;P}%
,u_{s})d\langle B^{i},B^{j}\rangle_{s}+Z_{s}^{t,x,u;P}dB_{s},\\
Y_{T}^{t,x,u;P}=\Phi(X_{T}^{t,x,u;P}),\text{ \ \ }s\in\lbrack t,T],\text{
\ \ }P-a.s.\text{.}%
\end{array}
\label{scop-2}%
\end{equation}

\begin{remark}
Note that under probability $P\in\mathcal{P}_{M}$, the process $\{B_{s}%
\}_{t\leq s\leq T}$ in the\ equation (\ref{socp-1}) and (\ref{scop-2}) is
generally not a standard Brownian Motion. But the martingale representation
property still holds for $P$ (see \cite{STZ} and \cite{STZ11}), thus there
still exist unique solutions for (\ref{socp-1}) and (\ref{scop-2}).
\end{remark}

By \cite{P10}, we have
\[
X_{s}^{t,x,u;P}=X_{s}^{t,x,u}\text{ \ }P-a.s..
\]
Soner et al. \cite{STZ11} give the following representation for the solution
$Y^{t,x,u}$\ of (\ref{state-2}):%
\[
Y_{t}^{t,x,u}=\underset{Q\in\mathcal{P}_{M}(t,P)}{\text{ess}\sup}^{P}\text{
}Y_{t}^{t,x,u;Q},\text{ \ }\ P-a.s.,
\]
where ess$\sup^{P}$ is the ess$\sup$ with respect to probability $P$ in the
classical sense and
\[
\mathcal{P}_{M}(t,P):=\{Q:Q(A)=P(A),\forall A\in\mathcal{F}_{t},Q\in
\mathcal{P}_{M}\}.
\]

For each fix $P\in\mathcal{P}_{M}$, the value function $V^{P}$ is defined to
be
\begin{equation}
V^{P}(t,x):=\underset{u(\cdot)\in\mathcal{U}[t,T]}{\text{ess}\sup}\text{
}\underset{Q\in\mathcal{P}_{M}(t,P)}{\text{ess}\sup}^{P}\text{ }%
Y_{t}^{t,x,u;Q},\text{ \ }\ P-a.s.. \label{value function}%
\end{equation}

\begin{remark}
If $V^{P}(t,x)$ is a deterministic function and independent of $P$, then by
Remark \ref{ren1}, we have $V(t,x)=V^{P}(t,x)$.
\end{remark}

\section{Dynamic programming principle}

For given initial data $(t,x)$, a positive real number $\delta\leq T-t$ and
$\eta\in L_{G}^{2}(\Omega_{t+\delta})$, we define
\[
\mathbb{G}_{t,t+\delta}^{t,x,u}[\eta]:=Y_{t}^{t,x,u},
\]
where $(X_{s}^{t,x,u},Y_{s}^{t,x,u},Z_{s}^{t,x,u})_{t\leq s\leq t+\delta}$ is
the solution of the following forward and backward equations:
\begin{align*}
dX_{s}^{t,x,u}  &  =b(s,X_{s}^{t,x,u},u_{s})ds+h_{ij}(s,X_{s}^{t,x,u}%
,u_{s})d\langle B^{i},B^{j}\rangle_{s}+\sigma(s,X_{s}^{t,x,u},u_{s})dB_{s},\\
X_{t}^{t,x,u}  &  =x
\end{align*}

and%
\begin{equation}%
\begin{array}
[c]{l}%
-dY_{s}^{t,x,u}=f(s,X_{s}^{t,x,u},Y_{s}^{t,x,u},Z_{s}^{t,x,u},u_{s}%
)ds+g_{ij}(s,X_{s}^{t,x,u},Y_{s}^{t,x,u},Z_{s}^{t,x,u},u_{s})d\langle
B^{i},B^{j}\rangle_{s}-Z_{s}^{t,x,u}dB_{s}-dK_{s}^{t,x,u},\\
Y_{t+\delta}^{t,x,u}=\eta,\text{ \ \ \ }s\in\lbrack t,t+\delta]\text{.}%
\end{array}
\label{prob--state-2}%
\end{equation}
Note that $\mathbb{G}_{t,t+\delta}^{t,x,u}[\cdot]$ is a (backward) semigroup
which was first introduced by Peng in\ \cite{peng-dpp-1}.

Now we give some notations:%
\[%
\begin{array}
[c]{l}%
L_{ip}(\Omega_{s}^{t}):=\{\varphi(B_{t_{1}}-B_{t},...,B_{t_{n}}-B_{t}%
):n\geq1,t_{1},...,t_{n}\in\lbrack t,s],\varphi\in C_{l.Lip}(\mathbb{R}%
^{d\times n})\};\\
M_{G}^{0,t}(t,T):=\{\eta_{s}=\sum_{j=0}^{N-1}\xi_{j}I_{[t_{j},t_{j+1}%
)}(s):s\in\lbrack t,T],t=t_{0}<\cdots<t_{N}=T,\xi_{i}\in L_{ip}(\Omega_{t_{i}%
}^{t})\};\\
M_{G}^{2,t}(t,T):=\{\text{the completion of }M_{G}^{0,t}(t,T)\text{ under
}\Vert\cdot\Vert_{M_{G}^{2}}\};\\
\mathcal{U}^{t}[t,T]:=\{u\in M_{G}^{2,t}(t,T;\mathbb{R}^{m})\text{ with values
in }U\};\\
\mathcal{U}_{0}[t,T]:=\{u=\sum\limits_{i=1}^{m}1_{A_{i}}u^{i}:m\in
\mathbb{N},u^{i}\in\mathcal{U}^{t}[t,T],\text{where }\{A_{i}\}_{i=1,\ldots
m}\text{ is a partition of }\Omega,A_{i}\in\mathcal{B}(\Omega_{s})\}.
\end{array}
\]

Our main result in this section is the following\ dynamic programming principle.

\begin{theorem}
\label{Thm-dpp} Let Assumptions \ref{assu-1} and \ref{assu-2} hold. Then for
any $t\leq T$, $x\in\mathbb{R}^{n}$, $V(t,x)$ exists and is deterministic.
Furthermore, for any $s\in\lbrack t,T]$, we have
\begin{equation}%
\begin{array}
[c]{rl}%
V(t,x)= & \underset{u(\cdot)\in\mathcal{U}[t,s]}{\text{ess}\sup}%
\mathbb{G}_{t,s}^{t,x,u}[V(s,X_{s}^{t,x,u})]\\
= & \underset{u(\cdot)\in\mathcal{U}^{t}[t,s]}{\sup}\mathbb{G}_{t,s}%
^{t,x,u}[V(s,X_{s}^{t,x,u})].
\end{array}
\label{DPP}%
\end{equation}

\end{theorem}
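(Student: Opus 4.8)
The plan is to follow Peng's backward semigroup scheme from \cite{peng-dpp-1}, but to carry it out in the quasi-sure setting by exploiting the representation $Y_{t}^{t,x,u}=\operatorname{ess\,sup}^{P}_{Q\in\mathcal{P}_{M}(t,P)}Y_{t}^{t,x,u;Q}$ together with the independence and stationarity of the increments of the $G$-Brownian motion. First I would assemble a block of a priori estimates: Gronwall-type bounds for the forward SDE \eqref{state-1} giving $\mathbb{\hat E}[\sup_{s}|X_{s}^{t,x,u}|^{2}]\le C(1+|x|^{2})$ and analogous stability estimates in the initial datum $x$, and the corresponding estimates for the $G$-BSDE \eqref{state-2}. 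From these one reads off that $x\mapsto V(t,x)$ is Lipschitz, hence continuous; this regularity is exactly what is needed later to replace $V(s,X_{s}^{t,x,u})$ by values on a fixed grid of initial data.

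The existence of $V(t,x)$ in $L_{G}^{2}(\Omega_{t})$ and its deterministic character I would obtain in two moves. Fixing $P\in\mathcal{P}_{M}$, $V^{P}(t,x)$ is the classical value function of a recursive control problem, and its essential supremum over $\mathcal{U}[t,T]$ can be approximated by a countable family of controls lying in $\mathcal{U}^{t}[t,T]$, i.e. controls built only from the increments $\{B_{r}-B_{t}:r\ge t\}$. For such controls both $X^{t,x,u}$ and the triple $(Y^{t,x,u},Z^{t,x,u},K^{t,x,u})$ are generated by $\{B_{r}-B_{t}:r\ge t\}$, which under the $G$-framework is independent of $\mathcal{F}_{t}$; consequently $Y_{t}^{t,x,u}$ equals its own conditional $G$-expectation at $t$ and is therefore deterministic q.s. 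The supremum preserves this, so $V^{P}(t,x)$ is deterministic and, by the Remark following \eqref{value function}, independent of $P$, which yields $V(t,x)=V^{P}(t,x)$ as well as the reduction of the essential supremum over $\mathcal{U}[t,s]$ to the plain supremum over $\mathcal{U}^{t}[t,s]$ in the second equality of \eqref{DPP}.

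For the semigroup identity the inequality $V(t,x)\le\operatorname{ess\,sup}_{u}\mathbb{G}_{t,s}^{t,x,u}[V(s,X_{s}^{t,x,u})]$ is the easy direction: by time-consistency of the $G$-BSDE one has $Y_{t}^{t,x,u}=\mathbb{G}_{t,s}^{t,x,u}[Y_{s}^{t,x,u}]$, and by uniqueness $Y_{s}^{t,x,u}=Y_{s}^{s,X_{s}^{t,x,u},u}\le V(s,X_{s}^{t,x,u})$ q.s.; the comparison theorem for $G$-BSDEs (\cite{HJPS1}) makes $\mathbb{G}_{t,s}^{t,x,u}[\cdot]$ monotone, so taking the essential supremum over $u$ gives the bound. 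The reverse inequality is the crux. Fixing $\varepsilon>0$, I would choose a countable Borel partition $\{O_{i}\}$ of $\mathbb{R}^{n}$ fine enough that, by the Lipschitz continuity of $V(s,\cdot)$, its oscillation on each cell is at most $\varepsilon$, pick points $x_{i}\in O_{i}$ and near-optimal controls $u^{i}\in\mathcal{U}^{s}[s,T]$ with $Y_{s}^{s,x_{i},u^{i}}\ge V(s,x_{i})-\varepsilon$, and then paste: given $\bar{u}\in\mathcal{U}^{t}[t,s]$, set $u=\bar{u}\,\mathbf{1}_{[t,s)}+\sum_{i}\mathbf{1}_{\{X_{s}^{t,x,\bar u}\in O_{i}\}}u^{i}\,\mathbf{1}_{[s,T]}$, which lies in $\mathcal{U}_{0}[t,T]$ since $\{X_{s}^{t,x,\bar u}\in O_{i}\}\in\mathcal{B}(\Omega_{s})$.

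The hard part will be to verify that this pasted control is admissible in the quasi-sure sense and that the semigroup respects the pasting, namely $Y_{s}^{s,X_{s}^{t,x,\bar u},u}\ge V(s,X_{s}^{t,x,\bar u})-C\varepsilon$ q.s. This needs a continuity-in-the-terminal-condition estimate for $\mathbb{G}_{t,s}^{t,x,\bar u}[\cdot]$ that is uniform over $\mathcal{P}_{M}$, together with a measurable-selection argument compatible with the non-dominated family $\mathcal{P}$ (this is precisely where the $G$-setting departs from the dominated case of \cite{peng-dpp-1}, since one cannot select against a single reference measure). Granting this, monotonicity of the semigroup gives $Y_{t}^{t,x,u}=\mathbb{G}_{t,s}^{t,x,\bar u}[Y_{s}^{s,X_{s}^{t,x,\bar u},u}]\ge\mathbb{G}_{t,s}^{t,x,\bar u}[V(s,X_{s}^{t,x,\bar u})]-C\varepsilon$ q.s.; taking the essential supremum over $\bar{u}$ and letting $\varepsilon\downarrow0$ closes the argument and, combined with the reduction in the second paragraph, delivers both equalities in \eqref{DPP}.
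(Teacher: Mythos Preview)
Your high-level plan---backward semigroup, a priori estimates, Lipschitz regularity of $V$ in $x$, reduction to $\mathcal{U}^{t}[t,T]$ via independence of increments, comparison theorem for the easy inequality, and a pasting/$\varepsilon$-optimality argument for the hard one---is the same architecture as the paper's. The existence and deterministic character of $V(t,x)$ are obtained there essentially as you sketch (Theorem~\ref{Compare}).

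The substantive divergence, and the gap in your proposal, is in the hard inequality. You try to paste controls \emph{directly in the $G$-framework} and run the argument with the $G$-semigroup $\mathbb{G}_{t,s}^{t,x,u}$. Two obstacles make this nontrivial and you do not resolve them. First, the pasted process $u=\bar u\,\mathbf{1}_{[t,s)}+\sum_i\mathbf{1}_{\{X_s^{t,x,\bar u}\in O_i\}}u^i\,\mathbf{1}_{[s,T]}$ need not lie in $\mathcal{U}[t,T]=M_G^2$, because $M_G^2$ is the closure of $L_{ip}$-cylinder step processes and indicators of generic Borel events are typically not in $L_G^{p}(\Omega_{s})$ (they fail quasi-continuity). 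Second, even granting admissibility, checking that $Y_s^{s,\xi,v}=\sum_i\mathbf{1}_{A_i}Y_s^{s,x_i,v^i}$ for the $G$-BSDE requires that $\sum_i\mathbf{1}_{A_i}K^{i}$ be a decreasing $G$-martingale; sums and indicator-multiples of $G$-martingales are not automatically $G$-martingales under a sublinear conditional expectation, so this is not the routine linear computation from \cite{peng-dpp-1}. Your reference to ``a measurable-selection argument compatible with $\mathcal{P}$'' does not address either point.

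The paper avoids both issues by never pasting at the $G$-level. It fixes $P\in\mathcal{P}_{M}$ and works throughout with the \emph{classical} semigroup $G_{t,s}^{t,x,u;Q}$ (no $K$-term) under each $Q\in\mathcal{P}_{M}(t,P)$. The key device is Lemma~\ref{Lem-dpp ine}(ii): for every $\varepsilon>0$ one constructs simultaneously a near-optimal control $v'\in\mathbb{U}[s,T]$ \emph{and} a near-optimal probability $Q'\in\mathcal{P}_{M}(s,P)$, the latter by pasting measures, $Q'(A)=\sum_iQ^{i}(A\cap A_i)$. Under a single classical $Q$ all indicator and pasting manipulations are unproblematic, the standard BSDE comparison applies, and only at the final step is the identity $\operatorname{ess\,sup}^{P}_{Q}G_{t,s}^{t,x,u;Q}[\,\cdot\,]=\mathbb{G}_{t,s}^{t,x,u}[\,\cdot\,]$ invoked to recover the $G$-semigroup. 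In short, the paper pastes both controls and probability measures in a dominated setting, rather than pasting controls alone quasi-surely; that is precisely how the decreasing $G$-martingale $K$ is kept out of the argument.
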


In order to prove Theorem \ref{Thm-dpp}, we need to study $V^{P}(t,x)$ through
equations (\ref{socp-1}) and (\ref{scop-2}). The following priori estimates
are classical and we omit the proof (refer to \cite{EPQ}).

\begin{lemma}
\label{Lem-basic est-0} Under Assumptions \ref{assu-1} and \ref{assu-2}, for
any $\xi_{1},\xi_{2}\in L_{G}^{2}(\Omega_{t})$ and $u,v\in\mathcal{U}[t,T],$
there exists a constant $C_{2}$ such that%
\[%
\begin{array}
[c]{l}%
E_{P}[\sup_{s\in\lbrack t,T]}\mid X_{s}^{t,\xi_{1},u;P}-X_{s}^{t,\xi_{2}%
,u;P}\mid^{2}\mid\mathcal{F}_{t}]\leq C_{2}\mid\xi_{1}-\xi_{2}\mid^{2};\\
\mid Y_{t}^{t,\xi_{1},u;P}-Y_{t}^{t,\xi_{2},u;P}\mid^{2}\leq C_{2}\mid\xi
_{1}-\xi_{2}\mid^{2};\\
\mid Y_{t}^{t,\xi_{1},u;P}-Y_{t}^{t,\xi_{1},v;P}\mid^{2}\leq C_{2}E_{P}%
[\int_{t}^{T}\mid u_{s}-v_{s}\mid^{2}ds\mid\mathcal{F}_{t}],\text{
\ \ }P-a.s..
\end{array}
\]

\end{lemma}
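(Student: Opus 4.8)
The plan is to treat equations (\ref{socp-1}) and (\ref{scop-2}) as classical SDEs and BSDEs under the fixed measure $P\in\mathcal{P}_M$, and to derive the three estimates by the standard It\^o formula plus Gronwall argument, taking care that the constant $C_2$ can be chosen independently of $P$. The structural fact I would exploit is that, since $P=P_\theta$ with $\theta$ valued in the bounded set $\Gamma$, the quadratic variation satisfies $d\langle B^i,B^j\rangle_s=(\theta_s\theta_s^{T})_{ij}\,ds$ with $\theta_s\theta_s^{T}$ uniformly bounded; hence every $d\langle B^i,B^j\rangle_s$-integral is dominated by a $ds$-integral with a constant depending only on $\Gamma$, and the martingale part $\int\cdot\,dB_s$ is a genuine $P$-martingale to which the Burkholder--Davis--Gundy inequality applies.

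For the first estimate I would set $\hat X_s=X_s^{t,\xi_1,u;P}-X_s^{t,\xi_2,u;P}$, apply It\^o's formula to $|\hat X_s|^2$ on $[t,T]$, and take $E_P[\cdot\,|\,\mathcal{F}_t]$. The Lipschitz bound in Assumption \ref{assu-1} (with the common control $u$, so the $|u-v|$ terms vanish) turns the drift, the $\langle B\rangle$-term and the quadratic variation of the $dB_s$-integral into terms of the form $C\int_t^s E_P[|\hat X_r|^2\,|\,\mathcal{F}_t]\,dr$. Gronwall's inequality then yields $E_P[|\hat X_s|^2\,|\,\mathcal{F}_t]\leq C|\xi_1-\xi_2|^2$ uniformly in $s$; rerunning the computation while keeping the supremum and invoking BDG on the stochastic integral upgrades this to the stated estimate with $\sup_{s\in[t,T]}$ inside the conditional expectation.

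For the second estimate I would put $\hat Y_s=Y_s^{t,\xi_1,u;P}-Y_s^{t,\xi_2,u;P}$ and $\hat Z_s=Z_s^{t,\xi_1,u;P}-Z_s^{t,\xi_2,u;P}$, apply It\^o to $|\hat Y_s|^2$, and integrate from $t$ to $T$ under $E_P[\cdot\,|\,\mathcal{F}_t]$. The terminal term $|\Phi(X_T^{t,\xi_1,u;P})-\Phi(X_T^{t,\xi_2,u;P})|^2\leq c^2|X_T^{t,\xi_1,u;P}-X_T^{t,\xi_2,u;P}|^2$ is controlled by the first estimate, while the Lipschitz bounds on $f$ and $g_{ij}$ in Assumption \ref{assu-2} produce terms in $|\hat X_s|$, $|\hat Y_s|$ and $|\hat Z_s|$. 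The quadratic-variation contribution $\int_t^T\hat Z_s^{T}\,d\langle B\rangle_s\,\hat Z_s$ arising from It\^o's formula dominates $\underline{\sigma}^2\int_t^T|\hat Z_s|^2\,ds$ by non-degeneracy, which lets me absorb the $\hat Z$ contributions of the driver after a Young's inequality; Gronwall's lemma then gives $|\hat Y_t|^2\leq C|\xi_1-\xi_2|^2$. The third estimate is identical except that the forward initial data coincide and the controls differ: the Lipschitz terms now carry the factor $|u_s-v_s|$, and the corresponding forward estimate $E_P[\sup_{s}|X_s^{t,\xi_1,u;P}-X_s^{t,\xi_1,v;P}|^2\,|\,\mathcal{F}_t]\leq CE_P[\int_t^T|u_s-v_s|^2ds\,|\,\mathcal{F}_t]$ feeds both the terminal and driver terms.

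The main obstacle I anticipate is not any single computation but the bookkeeping needed to guarantee that $C_2$ is independent of $P\in\mathcal{P}_M$. This requires that the Lipschitz constant $c$ in Assumptions \ref{assu-1}--\ref{assu-2} be uniform, which is given, and that the bound on $\theta_s\theta_s^{T}$ governing the $d\langle B\rangle_s$-integrals together with the BDG constant depend only on the fixed bounded closed set $\Gamma$. Once uniformity in $P$ is secured, each estimate reduces to a routine classical SDE/BSDE argument as in \cite{EPQ}.
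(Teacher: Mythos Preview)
Your proposal is correct and follows the standard It\^o--Gronwall--BDG argument for classical SDE/BSDE a priori estimates, which is exactly what the paper has in mind: the paper does not supply its own proof but simply states that these estimates are classical and refers to \cite{EPQ}. Your additional care in tracking that $C_2$ is independent of $P\in\mathcal{P}_M$ (via the uniform bound on $\theta_s\theta_s^{T}$ from the bounded set $\Gamma$) is appropriate and consistent with how the lemma is used downstream.
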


The following theorem shows that $V^{P}(t,x)$ is deterministic and independent
of $P\in\mathcal{P}_{M}$.

\begin{theorem}
\label{Compare}Under Assumptions (\ref{assu-1}) and (\ref{assu-2}), we have

(i) For a fixed $P\in\mathcal{P}_{M}$, $V^{P}(t,x)$ is a deterministic
function and
\[
V^{P}(t,x)=\underset{u\in\mathcal{U}^{t}[t,T]}{\text{ess}\sup}\underset{Q\in
\mathcal{P}_{M}(t,P)}{\text{ess}\sup}^{P}\text{ }Y_{t}^{t,x,u;Q},\ P-a.s.;
\]

(ii) For each $u\in\mathcal{U}^{t}[t,T],$ $Y_{t}^{t,x,u}$\ (the solution of
(\ref{prob--state-2}) at time $t$) is a deterministic function. Furthermore,
\[
V(t,x)=\underset{u\in\mathcal{U}^{t}[t,T]}{\sup}Y_{t}^{t,x,u}.
\]

\end{theorem}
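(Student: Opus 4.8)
{Proof proposal for Theorem \ref{Compare}}

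\textbf{Overall strategy.} The plan is to exploit the representation
\[
Y_{t}^{t,x,u}=\underset{Q\in\mathcal{P}_{M}(t,P)}{\text{ess}\sup}^{P}\text{ }Y_{t}^{t,x,u;Q},\ P\text{-a.s.}
\]
to pass from the $G$-BSDE to a family of \emph{classical} BSDEs under fixed measures $P\in\mathcal{P}_{M}$, where the priori estimates of Lemma \ref{Lem-basic est-0} and standard (Wiener-space) arguments apply. I would prove (i) first, showing that $V^{P}(t,x)$ is deterministic and independent of $P$, and then deduce (ii) from (i) together with Remark \ref{ren1}. The central mechanism throughout is an approximation argument reducing a general admissible control $u\in\mathcal{U}[t,T]$ to controls in $\mathcal{U}^{t}[t,T]$, combined with the invariance of the problem under measure-preserving shifts of the driving noise on $[0,t]$.

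\textbf{Step 1: $V^{P}$ is deterministic.} Fix $P\in\mathcal{P}_{M}$. Using the third estimate in Lemma \ref{Lem-basic est-0}, $u\mapsto Y_{t}^{t,x,u;Q}$ is Lipschitz in $L^{2}_{P}$ uniformly in $Q$, so the inner ess$\sup^{P}$ over $Q\in\mathcal{P}_{M}(t,P)$ and the outer ess$\sup$ over $u$ can be computed along a countable dense family of controls. To show the resulting random variable is constant, I would invoke a shift-invariance argument: since the state starts deterministically at $x$ at time $t$, and the coefficients are deterministic, the law of $(X^{t,x,u},Y^{t,x,u},Z^{t,x,u})$ on $[t,T]$ depends on the past only through $\mathcal{F}_{t}$-measurable randomization of $u$; for each fixed $\mathcal{F}_{t}$-measurable realization the value $Y_{t}$ is the same deterministic number. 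More precisely, because $\mathcal{P}_{M}(t,P)$ consists of all $Q$ agreeing with $P$ on $\mathcal{F}_{t}$, the inner essential supremum does not see the realization of the path before $t$, and hence $V^{P}(t,x)$ is $\mathcal{F}_{t}$-measurable yet blind to $\mathcal{F}_{t}$, forcing it to be constant $P$-a.s.

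\textbf{Step 2: reduction to $\mathcal{U}^{t}[t,T]$ and independence of $P$.} For the identity in (i), the inclusion $\mathcal{U}^{t}[t,T]\subset\mathcal{U}[t,T]$ gives one direction immediately. For the reverse, I would show that any $u\in\mathcal{U}[t,T]$ can be approximated in $M_{G}^{2}$ by elements of $\mathcal{U}_{0}[t,T]$, i.e.\ by finite combinations $\sum_{i}1_{A_{i}}u^{i}$ with $u^{i}\in\mathcal{U}^{t}[t,T]$ and $\{A_{i}\}$ a partition by $\mathcal{F}_{t}$-measurable events; the Lipschitz estimate of Lemma \ref{Lem-basic est-0} then controls the error in $Y_{t}$. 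On each $A_{i}$ the ess$\sup$ is achieved (up to $\varepsilon$) by the $\mathcal{F}_{t}$-conditionally constant value attached to $u^{i}$, and pasting over the partition shows the ess$\sup$ over $\mathcal{U}[t,T]$ coincides with the ess$\sup$ over $\mathcal{U}^{t}[t,T]$. Since each $Y_{t}^{t,x,u;Q}$ for $u\in\mathcal{U}^{t}[t,T]$ depends on the driving noise only through increments after $t$, whose law under any $Q\in\mathcal{P}_{M}(t,P)$ is governed by $\Gamma$ and not by $P|_{\mathcal{F}_{t}}$, the deterministic value $V^{P}(t,x)$ is the same for every $P\in\mathcal{P}_{M}$.

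\textbf{Step 3: passage to (ii).} Once $V^{P}(t,x)$ is a constant independent of $P$, Remark \ref{ren1} (and the preceding Remark identifying $V$ with $V^{P}$) yields $V(t,x)=V^{P}(t,x)$. For the representation $V(t,x)=\sup_{u\in\mathcal{U}^{t}[t,T]}Y_{t}^{t,x,u}$, I would first note that for $u\in\mathcal{U}^{t}[t,T]$ the $G$-BSDE solution $Y_{t}^{t,x,u}$ equals $\text{ess}\sup^{P}_{Q}Y_{t}^{t,x,u;Q}$, which by the same shift-invariance is a deterministic constant; taking the ess$\sup$ over $\mathcal{U}^{t}[t,T]$ of these constants turns the essential supremum into an ordinary supremum, and Step 2 guarantees this matches $V(t,x)$.

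\textbf{Main obstacle.} I expect the hard part to be Step 2, specifically the rigorous approximation of an arbitrary $u\in\mathcal{U}[t,T]$ by $\mathcal{U}_{0}[t,T]$-controls together with the \emph{measurable selection / pasting} that lets one exchange the outer essential supremum over $\mathcal{U}[t,T]$ for a pointwise (on each $A_{i}$) supremum over $\mathcal{U}^{t}[t,T]$. In the $G$-framework this is delicate because there is no single reference measure: the pasting must be consistent simultaneously under every $P\in\mathcal{P}_{M}$, and the decreasing $G$-martingale $K$ in the BSDE means one cannot directly reuse the classical stability estimates without first reducing to the fixed-$P$ classical BSDE \eqref{scop-2}. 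Handling the interaction between the non-dominated family $\mathcal{P}_{M}$ and the essential-supremum structure is where the genuine $G$-expectation difficulty lies.
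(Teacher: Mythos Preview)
Your proposal contains the right ingredients and is close to the paper's proof, but the order of your Steps~1 and~2 creates a gap. In Step~1 you try to argue that $V^{P}(t,x)$ is deterministic \emph{before} reducing to $\mathcal{U}^{t}[t,T]$, claiming that ``the inner essential supremum does not see the realization of the path before $t$.'' This is not true as stated: for a general $u\in\mathcal{U}[t,T]$ the control itself is $\mathcal{F}_{t}$-measurable randomness, so $Y_{t}^{t,x,u;Q}$ genuinely depends on the path on $[0,t]$, and a bare shift-invariance argument cannot force the ess$\sup$ to be constant. The paper avoids this by doing your Step~2 \emph{first}: it reduces the outer ess$\sup$ to $\mathcal{U}^{t}[t,T]$ via the density of $\mathcal{U}_{0}[t,T]$ and the algebraic identity $Y_{t}^{t,x,\sum 1_{A_{i}}u^{i};Q}=\sum 1_{A_{i}}Y_{t}^{t,x,u^{i};Q}$ (coming from BSDE uniqueness), and only then observes that for $u\in\mathcal{U}^{t}[t,T]$ the $G$-BSDE value $Y_{t}^{t,x,u}=\text{ess}\sup^{P}_{Q}Y_{t}^{t,x,u;Q}$ is deterministic. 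Determinism and independence of $P$ then fall out of (i) immediately, with no separate shift-invariance step needed.

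A second, smaller point: you flag ``measurable selection / pasting'' as the main obstacle, but the paper does not need any selection. The inequality direction in Step~2 requires only the trivial bound $Y_{t}^{t,x,u^{i};Q}\leq\text{ess}\sup_{u\in\mathcal{U}^{t}}\text{ess}\sup^{P}_{Q}Y_{t}^{t,x,u;Q}$ on each $A_{i}$, summed over the partition; no $\varepsilon$-optimal selection is invoked. So the argument is more elementary than you anticipate, and the genuine work is the density of $\mathcal{U}_{0}[t,T]$ in $\mathcal{U}[t,T]$ (which the paper imports from \cite{DHP11}) together with the decomposition identity above.
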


\noindent\textbf{Proof:} \textbf{(i)} Without loss of generality, for
(\ref{socp-1}) and (\ref{scop-2}), we only study the case $n=d=1$ and
$h_{ij}=g_{ij}=0$.

By the definition of $V^{P}(t,x),$%

\[
V^{P}(t,x)\underset{u\in\mathcal{U}^{t}[t,T]}{\geq\text{ess}\sup}\text{
}\underset{Q\in\mathcal{P}_{M}(t,P)}{\text{ess}\sup}^{P}\text{ }%
Y_{t}^{t,x,u;Q}.
\]

Analysis similar to that in Lemma 43 in \cite{DHP11} shows that $\mathcal{U}%
_{0}[t,T]$ is dense in $\mathcal{U}[t,T]$ under probability $Q$. It yields that%

\begin{equation}%
\begin{array}
[c]{rc}%
V^{P}(t,x)= & \underset{u(\cdot)\in\mathcal{U}[t,T]}{\text{ess}\sup}\text{
}\underset{Q\in\mathcal{P}_{M}(t,P)}{\text{ess}\sup}^{P}\text{ }%
Y_{t}^{t,x,u;Q}\\
= & \underset{Q\in\mathcal{P}_{M}(t,P)}{\text{ess}\sup}^{P}\text{
}\underset{u(\cdot)\in\mathcal{U}[t,T]}{\text{ess}\sup}\text{ }Y_{t}%
^{t,x,u;Q}\\
= & \underset{Q\in\mathcal{P}_{M}(t,P)}{\text{ess}\sup}^{P}\text{
}\underset{u\in\mathcal{U}_{0}[t,T]}{\text{ess}\sup}\text{ }Y_{t}^{t,x,u;Q}\\
= & \underset{u\in\mathcal{U}_{0}[t,T]}{\text{ess}\sup}\text{ }\underset{Q\in
\mathcal{P}_{M}(t,P)}{\text{ess}\sup}^{P}\text{ }Y_{t}^{t,x,u;Q}.
\end{array}
\label{deter-2}%
\end{equation}

Set $u=\sum\limits_{i=1}^{m}1_{A_{i}}u^{i}\in\mathcal{U}_{0}[t,T].$ Consider
the following equation%

\[%
\begin{array}
[c]{l}%
X_{s}^{{t,x},u^{i};Q}=x+\int_{t}^{s}b(r,X_{r}^{{t,x},u^{i};Q},u_{r}%
^{i})dr+\int_{t}^{s}\sigma(r,X_{r}^{{t,x},u^{i};Q},u_{r}^{i})dB_{r},\\
Y_{s}^{{t,x},u^{i};Q}=\Phi(X_{T}^{{t,x},u^{i};Q})+\int_{s}^{T}f(r,X_{r}%
^{{t,x},u^{i};Q},Y_{r}^{{t,x},u^{i};Q},Z_{r}^{{t,x},u^{i};Q},u_{r}^{i}%
)dr-\int_{s}^{T}Z_{r}^{{t,x},u^{i};Q}dB_{r}\text{.}%
\end{array}
\]
Multiplying by $I_{A_{i}}$ and adding the corresponding terms, we obtain%

\[%
\begin{array}
[c]{ll}%
\sum\limits_{i=1}^{N}1_{A_{i}}X_{s}^{{t,x},u^{i};Q}= & x+\sum\limits_{i=1}%
^{N}1_{A_{i}}\int_{t}^{s}b(r,X_{r}^{{t,x},u^{i};Q},u_{r}^{i})dr+\sum
\limits_{i=1}^{N}1_{A_{i}}\int_{t}^{s}\sigma(r,X_{r}^{{t,x},u^{i};Q},u_{r}%
^{i})dB_{r},\\
\sum\limits_{i=1}^{N}1_{A_{i}}Y_{s}^{{t,x},u^{i};Q}= & \sum\limits_{i=1}%
^{N}1_{A_{i}}\Phi(X_{T}^{{t,x},u^{i};Q})-\sum\limits_{i=1}^{N}1_{A_{i}}%
\int_{s}^{T}Z_{r}^{{t,x},u^{i};Q}dB_{r}\\
& +\sum\limits_{i=1}^{N}1_{A_{i}}\int_{s}^{T}f(r,X_{r}^{{t,x},u^{i};Q}%
,Y_{r}^{{t,x},u^{i};Q},Z_{r}^{{t,x},u^{i};Q},u_{r}^{i})dr.
\end{array}
\]
Then%
\[%
\begin{array}
[c]{ll}%
\sum\limits_{j=1}^{N}1_{A_{i}}X_{s}^{{t,x},u^{i};Q}= & x+\int_{t}^{s}%
b(r,\sum\limits_{j=1}^{N}1_{A_{i}}X_{r}^{{t,x},u^{i};Q},\sum\limits_{j=1}%
^{N}1_{A_{i}}u_{r}^{i})dr+\int_{t}^{s}\sigma(r,\sum\limits_{j=1}^{N}1_{A_{i}%
}X_{r}^{{t,x},u^{i};Q},\sum\limits_{j=1}^{N}1_{A_{i}}u_{r}^{i})dB_{r},\\
\sum\limits_{j=1}^{N}1_{A_{i}}Y_{s}^{{t,x},u^{i};Q}= & \Phi(\sum
\limits_{j=1}^{N}1_{A_{i}}X_{T}^{{t,x},u^{i};Q})-\int_{s}^{T}(\sum
\limits_{j=1}^{N}1_{A_{i}}Z_{r}^{{t,x},u^{i};Q})dB_{r}\\
& +\int_{s}^{T}f(r,\sum\limits_{i=1}^{N}1_{A_{i}}X_{r}^{{t,x},u^{i};Q}%
,\sum\limits_{i=1}^{N}1_{A_{i}}Y_{r}^{{t,x},u^{i};Q},\sum\limits_{i=1}%
^{N}1_{A_{i}}Z_{r}^{{t,x},u^{i};Q},\sum\limits_{i=1}^{N}1_{A_{i}}u_{r}%
^{i})dr\text{.}%
\end{array}
\]
By the uniqueness theorem of BSDE, we have%

\[
Y_{t}^{{t,x},u;Q}=\sum\limits_{i=1}^{N}1_{A_{i}}Y_{t}^{{t,x},u^{i};Q}\leq
\sum\limits_{i=1}^{N}1_{A_{i}}\underset{u\in\mathcal{U}^{t}[t,T]}{\text{ess}%
\sup}\text{ }\underset{Q\in\mathcal{P}_{M}(t,P)}{\text{ess}\sup}^{P}\text{
}Y_{t}^{t,x,u;Q}.
\]

From this we get%
\[
V^{P}(t,x)\leq\underset{u\in\mathcal{U}^{t}[t,T]}{\text{ess}\sup
}\underset{Q\in\mathcal{P}_{M}(t,P)}{\text{ess}\sup}^{P}Y_{t}^{t,x,u;Q}%
,\ P-a.s..
\]
Thus%
\[
V^{P}(t,x)=\underset{u\in\mathcal{U}^{t}[t,T]}{\text{ess}\sup}\underset{Q\in
\mathcal{P}_{M}(t,P)}{\text{ess}\sup}^{P}\text{ }Y_{t}^{t,x,u;Q}.
\]

\textbf{(ii)} For each $u\in\mathcal{U}^{t}[t,T]$, it is easy to check that
$Y_{t}^{t,x,u}$ is a deterministic function. Note that
\begin{equation}
Y_{t}^{t,x,u}=\underset{Q\in\mathcal{P}_{M}(t,P)}{\text{ess}\sup}^{P}%
Y_{t}^{t,x,u;Q}\;P-a.s.. \label{relation}%
\end{equation}
Thus for each $P\in\mathcal{P}_{M}$, we obtain%
\[
V^{P}(t,x)=\underset{u\in\mathcal{U}^{t}[t,T]}{\sup}Y_{t}^{t,x,u}\;P-a.s.,
\]
which implies that $V^{P}(t,x)$ is a constant and independent of $P$. By the
definition of $V(t,x)$, we deduce that $V(t,x)$ is deterministic and
\[
V(t,x)=\underset{u\in\mathcal{U}^{t}[t,T]}{\sup}Y_{t}^{t,x,u}.
\]

This completes the proof. \ \ \ \ $\Box$

We have the following estimations of the continuity of value function $V(t,x)$
with respect to $x$.

\begin{lemma}
\label{Lem-basic est-1}$\forall t\in\lbrack0,T]$ and $x,x^{\prime}%
\in\mathbb{R}^{n}{,}$ there exists a constant $C_{0}$ such that

(i) $\mid V(t,x)-V(t,x^{\prime})\mid\leq C_{0}\mid x-x^{\prime}\mid;$

(ii) $\mid V(t,x)\mid\leq C_{0}(1+\mid x\mid).$
\end{lemma}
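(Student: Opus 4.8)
The plan is to prove both estimates by reducing the problem, via Theorem \ref{Compare}(ii), to the representation
\[
V(t,x)=\underset{u\in\mathcal{U}^{t}[t,T]}{\sup}Y_{t}^{t,x,u},
\]
so that it suffices to control the dependence of $Y_{t}^{t,x,u}$ on the initial datum $x$ and to establish the linear growth of $Y_{t}^{t,x,0}$, both uniformly in $u\in\mathcal{U}^{t}[t,T]$. The essential point is that a supremum (over $u$) of functions satisfying a common Lipschitz bound is again Lipschitz with the same constant, and likewise for linear growth; so the two assertions follow once we have the bounds at the level of each fixed control.

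For part (i) I would fix $u\in\mathcal{U}^{t}[t,T]$ and compare $Y_{t}^{t,x,u}$ with $Y_{t}^{t,x',u}$. Passing to a representing probability $P\in\mathcal{P}_{M}$ and using the relation $Y_{t}^{t,x,u}=\underset{Q\in\mathcal{P}_{M}(t,P)}{\text{ess}\sup}^{P}Y_{t}^{t,x,u;Q}$ from (\ref{relation}), it is enough to estimate $|Y_{t}^{t,x,u;Q}-Y_{t}^{t,x',u;Q}|$ uniformly in $Q$. This is exactly the content of the classical a priori estimate in Lemma \ref{Lem-basic est-0}: the first inequality gives $E_{P}[\sup_{s}|X_{s}^{t,x,u;P}-X_{s}^{t,x',u;P}|^{2}\mid\mathcal{F}_{t}]\leq C_{2}|x-x'|^{2}$, and the second gives $|Y_{t}^{t,x,u;P}-Y_{t}^{t,x',u;P}|^{2}\leq C_{2}|x-x'|^{2}$. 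Since the constant $C_{2}$ is independent of $u$ and $Q$, taking the essential supremum over $Q$ and then the supremum over $u$ preserves the bound, yielding $|V(t,x)-V(t,x')|\leq C_{0}|x-x'|$ with $C_{0}=C_{2}^{1/2}$.

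For part (ii) the strategy is the same, comparing $Y_{t}^{t,x,u}$ against the trivial control or, more directly, against zero. Using the growth assumptions from the Remark following Assumption \ref{assu-2} (namely $|b|,|h_{ij}|,|\sigma|\leq K(1+|x|)$, $|f(s,x,0,0,u)|,|g_{ij}(s,x,0,0,u)|\leq c(1+|x|)$ and $|\Phi(x)|\leq K(1+|x|)$), the standard BSDE a priori estimate gives $|Y_{t}^{t,x,u;Q}|\leq C(1+|x|)$ with $C$ independent of $u$ and $Q$; I would obtain this by the usual Gronwall/linear-growth argument applied under each $Q$, or simply by combining part (i) with a bound on $|V(t,0)|=\sup_{u}|Y_{t}^{t,0,u}|$. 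Taking suprema then gives $|V(t,x)|\leq C_{0}(1+|x|)$.

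The main obstacle is not the inequality itself but ensuring that the constants $C_{2}$, $C$ are genuinely uniform over the control $u$ and over all $Q\in\mathcal{P}_{M}(t,P)$, and that the passage from the pathwise $P$-a.s.\ estimates to a deterministic Lipschitz bound on $V$ is legitimate. The uniformity in $u$ is clear because the Lipschitz and growth constants in Assumptions \ref{assu-1}--\ref{assu-2} do not depend on the control value; the uniformity in $Q$ follows because the representing measures in $\mathcal{P}_{M}$ all arise from the same structural bounds on $\Gamma$, so the classical estimates hold with a single constant. The remaining care is in exchanging $\sup_{u}$ with the essential supremum over $Q$ and in using the fact (from Theorem \ref{Compare}) that $V(t,x)$ is deterministic, so that the final bounds, holding $P$-a.s.\ for each $P$, are in fact numerical inequalities between the constants $V(t,x)$ and $V(t,x')$.
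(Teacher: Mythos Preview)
Your proposal is correct and follows essentially the same route as the paper: reduce to the pointwise estimate $|Y_{t}^{t,x,u;Q}-Y_{t}^{t,x',u;Q}|\leq C_{0}|x-x'|$ from Lemma \ref{Lem-basic est-0}, observe that the constant is uniform in $u$ and $Q$, and then pass to the suprema defining $V$. The paper's argument is in fact terser (it does not spell out part (ii) at all), so your additional remarks on uniformity and linear growth are a welcome elaboration rather than a departure.
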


\noindent\textbf{Proof. }By Lemma \ref{Lem-basic est-0}, we have
\[
\mid Y_{t}^{t,x,u;Q}-Y_{t}^{t,x^{\prime},u;Q}\mid\leq C_{0}\mid x-x^{\prime
}\mid,\text{ \ \ }P-a.s..
\]

It is easy to verify that for any $P\in\mathcal{P}_{M},$%
\[%
\begin{array}
[c]{cl}%
\mid V(t,x)-V(t,x^{\prime})\mid & \leq\underset{u(\cdot)\in\mathcal{U}%
[t,T]}{\sup}\text{ }\underset{Q\in\mathcal{P}_{M}(t,P)}{\text{ess}\sup}%
^{P}\text{ }\mid Y_{t}^{t,x,u;Q}-Y_{t}^{t,x^{\prime},u;Q}\mid\\
& \leq C_{0}\mid x-x^{\prime}\mid.
\end{array}
\]

This completes the proof. $\ \ \ \ \ \Box$

$\forall s\geq t,$ define%
\[%
\begin{array}
[c]{l}%
\mathbb{M}^{2,0}(s,T)=\{\eta_{t}=\sum_{i=0}^{N-1}\xi_{t_{i}}I_{[t_{i}%
,t_{i+1})}(t):s=t_{0}<\cdots<t_{N}=T,\xi_{t_{i}}\in\mathbb{L}^{2}%
(\Omega_{t_{i}})\};\\
\mathbb{M}_{G}^{2}(s,T)=\{\text{the completion of }\mathbb{M}^{2,0}(s,T)\text{
under }||\eta||_{\mathbb{M}^{2}}:=(\mathbb{\hat{E}}[\int_{0}^{T}|\eta_{t}%
|^{2}dt])^{1/2}\};\\
\mathbb{U}[s,T]=\{u:[s,T]\times\Omega\rightarrow U:u\in\mathbb{M}_{G}%
^{2}(s,T;\mathbb{R}^{m})\}.
\end{array}
\]

\begin{lemma}
\label{Lem-dpp ine} Suppose $s\in\lbrack0,T]$ and $\xi\in L_{G}^{2}(\Omega
_{s})$. Then we have

(i) for any $v(\cdot)\in\mathcal{U}[s,T]$ and$\ Q\in\mathcal{P}_{M}(s,P),\ $%
\begin{equation}
V(s,\xi)\geq Y_{s}^{s,\xi,v;Q},\text{ \ \ }P-a.s.; \label{dpp ine-1}%
\end{equation}

(ii) for any $\varepsilon>0,$ there is an admissible control $v^{\prime}%
(\cdot)\in\mathbb{U}[s,T]$ and$\ Q^{\prime}\in\mathcal{P}_{M}(s,P)$ such that
$\ $%
\begin{equation}
V(s,\xi)\leq Y_{s}^{s,\xi_{s},v^{\prime};Q^{\prime}}+\varepsilon,\ \ \ P-a.s.;
\label{dpp ine-2}%
\end{equation}

(iii)
\begin{equation}
V(s,\xi)=\underset{v(\cdot)\in\mathcal{U}[s,T]}{\text{ess}\sup}Y_{s}^{s,\xi
,v}. \label{dpp ine-3}%
\end{equation}

\end{lemma}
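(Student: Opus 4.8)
The plan is to establish (i) and (ii) first for initial data $\xi$ of the simple form $\xi=\sum_{i=1}^{N}x_{i}1_{A_{i}}$, with $\{A_{i}\}$ a finite $\mathcal{B}(\Omega_{s})$-partition and $x_{i}\in\mathbb{R}^{n}$, and then to pass to a general $\xi\in L_{G}^{2}(\Omega_{s})$ by density and the Lipschitz estimates. Part (iii) will then drop out by combining (i) and (ii) and letting $\varepsilon\to0$. The recurring technical device is a locality (pasting) property: since each $1_{A_{i}}$ is $\mathcal{F}_{s}$-measurable, uniqueness of the solutions of the forward SDE (\ref{socp-1}) and the backward SDE (\ref{scop-2}) yields $X^{s,\xi,v;Q}=\sum_{i}1_{A_{i}}X^{s,x_{i},v;Q}$ and hence $Y_{s}^{s,\xi,v;Q}=\sum_{i}1_{A_{i}}Y_{s}^{s,x_{i},v;Q}$, $P$-a.s., and the same decomposition holds when different controls are used on different $A_{i}$.

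For (i), fix $v\in\mathcal{U}[s,T]$ and $Q\in\mathcal{P}_{M}(s,P)$ and take $\xi$ simple. Because $V$ is a deterministic continuous function (Theorem \ref{Compare}), I have $V(s,\xi)=\sum_{i}1_{A_{i}}V(s,x_{i})$. The chain of equalities (\ref{deter-2}) in the proof of Theorem \ref{Compare} shows that $V(s,x_{i})=V^{P}(s,x_{i})\geq Y_{s}^{s,x_{i},v;Q}$ for every single admissible control and every single $Q$; multiplying by $1_{A_{i}}$ and summing, together with the locality property, gives $V(s,\xi)\geq Y_{s}^{s,\xi,v;Q}$, i.e. (\ref{dpp ine-1}) for simple $\xi$. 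For general $\xi$ I would choose simple $\xi_{n}\to\xi$ in $L_{G}^{2}(\Omega_{s})$ and pass to an a.s.\ convergent subsequence, using $|V(s,\xi)-V(s,\xi_{n})|\leq C_{0}|\xi-\xi_{n}|$ (Lemma \ref{Lem-basic est-1}) and $|Y_{s}^{s,\xi,v;Q}-Y_{s}^{s,\xi_{n},v;Q}|\leq\sqrt{C_{2}}\,|\xi-\xi_{n}|$ (Lemma \ref{Lem-basic est-0}).

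For (ii), again start with $\xi$ simple. By Theorem \ref{Compare}(ii), $V(s,x_{i})=\underset{u\in\mathcal{U}^{s}[s,T]}{\sup}Y_{s}^{s,x_{i},u}$ is a supremum of real numbers, so for each $i$ I can pick $u^{i}\in\mathcal{U}^{s}[s,T]$ with $Y_{s}^{s,x_{i},u^{i}}\geq V(s,x_{i})-\varepsilon/2$. Pasting these selections into $v^{\prime}=\sum_{i}1_{A_{i}}u^{i}\in\mathcal{U}_{0}[s,T]\subset\mathbb{U}[s,T]$ and using locality gives $Y_{s}^{s,\xi,v^{\prime}}=\sum_{i}1_{A_{i}}Y_{s}^{s,x_{i},u^{i}}\geq V(s,\xi)-\varepsilon/2$. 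It then remains to descend from the $G$-BSDE value $Y_{s}^{s,\xi,v^{\prime}}$ to a single classical solution: using the Soner--Touzi--Zhang representation $Y_{s}^{s,\xi,v^{\prime}}=\underset{Q\in\mathcal{P}_{M}(s,P)}{\text{ess}\sup}{}^{P}Y_{s}^{s,\xi,v^{\prime};Q}$ recalled in Section 2, I would select $Q^{\prime}\in\mathcal{P}_{M}(s,P)$ with $Y_{s}^{s,\xi,v^{\prime};Q^{\prime}}\geq Y_{s}^{s,\xi,v^{\prime}}-\varepsilon/2$, which yields (\ref{dpp ine-2}); the general $\xi$ follows by the Lipschitz estimates as before. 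Finally, part (i) together with the representation $Y_{s}^{s,\xi,v}=\underset{Q}{\text{ess}\sup}{}^{P}Y_{s}^{s,\xi,v;Q}$ gives $V(s,\xi)\geq Y_{s}^{s,\xi,v}$ for every $v$, hence $V(s,\xi)\geq\underset{v}{\text{ess}\sup}\,Y_{s}^{s,\xi,v}$; part (ii) gives $V(s,\xi)\leq Y_{s}^{s,\xi,v^{\prime};Q^{\prime}}+\varepsilon\leq Y_{s}^{s,\xi,v^{\prime}}+\varepsilon\leq\underset{v}{\text{ess}\sup}\,Y_{s}^{s,\xi,v}+\varepsilon$, and letting $\varepsilon\to0$ proves (\ref{dpp ine-3}).

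I expect the main obstacle to be the final step of (ii): extracting a \emph{single} measure $Q^{\prime}$ that comes within $\varepsilon$ of the essential supremum over $\mathcal{P}_{M}(s,P)$ pointwise $P$-a.s., rather than merely in $L^{1}(P)$. The upward-directedness of the family $\{Y_{s}^{s,\xi,v^{\prime};Q}\}_{Q}$ (obtained by pasting densities along $\mathcal{F}_{s}$) only furnishes an increasing maximizing sequence $Q_{n}$ with $Y_{s}^{s,\xi,v^{\prime};Q_{n}}\uparrow Y_{s}^{s,\xi,v^{\prime}}$ $P$-a.s., and converting this into a uniform within-$\varepsilon$ bound from one $Q^{\prime}$ requires the finer structure of $\mathcal{P}_{M}(s,P)$ and the martingale representation under each $P\in\mathcal{P}_{M}$. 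A secondary technical point, needed throughout, is the careful verification of the locality/pasting property for the $G$-SDE and $G$-BSDE under $\mathcal{F}_{s}$-measurable initial data and piecewise-defined controls, which underlies both the decomposition $Y_{s}^{s,\xi,\cdot}=\sum_{i}1_{A_{i}}Y_{s}^{s,x_{i},\cdot}$ and the admissibility of $v^{\prime}\in\mathcal{U}_{0}[s,T]$.
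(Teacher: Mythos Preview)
Your treatment of (i) and (iii) coincides with the paper's. For (ii) the paper follows a route that sidesteps precisely the obstacle you flagged, and there is also a second point where your argument would not go through as written.

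Rather than first selecting $u^{i}\in\mathcal{U}^{s}[s,T]$ via the $G$-BSDE value $Y_{s}^{s,x_{i},u^{i}}$ and \emph{afterwards} trying to extract a single $Q'$ realizing $\underset{Q}{\text{ess}\sup}{}^{P}\,Y_{s}^{s,\xi,v';Q}$ to within $\varepsilon/2$, the paper selects for each $x^{i}$ a near-optimal \emph{pair} $(v^{i},Q^{i})$ with $V(s,x^{i})\leq Y_{s}^{s,x^{i},v^{i};Q^{i}}+\varepsilon/3$ $P$-a.s., and then pastes \emph{both} the controls and the measures:
\[
v'=\sum_{i}1_{A_{i}}v^{i}\in\mathbb{U}[s,T],\qquad Q'(A)=\sum_{i}Q^{i}(A\cap A_{i}),\quad A\in\mathcal{B}(\Omega_{T}).
\]
Stability of $\mathcal{P}_{M}(s,P)$ under such $\mathcal{F}_{s}$-conditional pasting gives $Q'\in\mathcal{P}_{M}(s,P)$, and locality yields $Y_{s}^{s,\eta,v';Q'}=\sum_{i}1_{A_{i}}Y_{s}^{s,x^{i},v^{i};Q^{i}}$ directly. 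This explicit construction of $Q'$ is the missing ingredient you were looking for; no upward-directedness or maximizing-sequence extraction is needed.

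The second point concerns the passage to general $\xi$. Your ``by the Lipschitz estimates as before'' works for (i) because there $v$ and $Q$ are fixed while $\xi_{n}\to\xi$; in (ii), however, the near-optimal $(v',Q')$ depends on the approximant, so a limiting argument along a sequence $\xi_{n}$ does not produce a $(v',Q')$ for $\xi$. The paper avoids this by approximating $\xi$ once by a countably-valued simple function $\eta=\sum_{i=1}^{\infty}1_{A_{i}}x^{i}$ satisfying the \emph{pointwise} bound $|\eta-\xi|\leq\varepsilon/(3C)$ (obtained by discretizing the range of $\xi$), carrying out the pasting construction for $\eta$, and then using the Lipschitz bounds $|Y_{s}^{s,\eta,v';Q'}-Y_{s}^{s,\xi,v';Q'}|\leq\varepsilon/3$ and $|V(s,\eta)-V(s,\xi)|\leq\varepsilon/3$ to transfer (\ref{dpp ine-2}) to $\xi$.
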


\noindent\textbf{Proof. (i) }Set$\ $%
\[
\xi=\sum\limits_{i=1}^{N}1_{A_{i}}x^{i}\in\mathbb{L}(\Omega_{s}),
\]
where $\{A_{i}\}_{i=1,\ldots N}$ is a partition of $\Omega$, $A_{i}%
\in\mathcal{B}(\Omega_{s})$ and $x^{i}\in\mathbb{R}^{n}.$

For any $v(\cdot)\in\mathcal{U}[s,T],$ $Q\in\mathcal{P}_{M}(s,P),$ we have%
\[
Y_{s}^{s,\xi,v;Q}=\sum\limits_{i=1}^{N}1_{A_{i}}Y_{s}^{s,x^{i},v;Q}\leq
\sum\limits_{i=1}^{N}1_{A_{i}}V(s,x^{i})=V(s,\sum\limits_{i=1}^{N}1_{A_{i}%
}x^{i})=V(s,\xi).
\]

For the general case, note that $V$ is continuous in $x$ and $Y_{s}%
^{s,\xi,v;Q}$ is continuous in $\xi$. We can choose a sequence of simple
random variables $\{\xi^{i}\}$ $(i=1,2,\ldots)$ which converges to $\xi$.
Using similar techniques in Lemma \ref{Lem-basic est-0} and
\ref{Lem-basic est-1}, we have%

\[
E_{P}\mid Y_{s}^{s,\xi,v;Q}-Y_{s}^{s,\xi^{i},v;Q}\mid^{2}\rightarrow0,\text{
\ \ }E_{P}\mid V(s,\xi)-V(s,\xi^{i})\mid^{2}\rightarrow0.
\]
Then (\ref{dpp ine-1}) holds.

\textbf{(ii)} For $\xi\in L_{G}^{2}(\Omega_{s})$, we can construct a random
variable
\[
\eta=\sum\limits_{i=1}^{\infty}1_{A_{i}}x^{i}\in\mathbb{L}^{2}(\Omega_{s}),
\]
such that
\[
\mid\eta-\xi\mid\leq\frac{\varepsilon}{3C}%
\]
where $C:=\max\{C_{0},C_{2}\}.$

By Lemma \ref{Lem-basic est-0} and \ref{Lem-basic est-1}, for any $v(\cdot
)\in\mathcal{U}[s,T],$%

\[
\mid Y_{s}^{s,\eta,v;Q}-Y_{s}^{s,\xi,v;Q}\mid\leq\frac{\varepsilon}{3},\;\mid
V(s,\eta)-V(s,\xi)\mid\leq\frac{\varepsilon}{3}.
\]
For every $x^{i},$ we can choose an\ admissible control $v^{i}(\cdot
)\in\mathcal{U}[s,T]$ and $Q^{i}$ such that
\[
V(s,x^{i})\leq Y_{s}^{s,x^{i},v^{i};Q^{i}}+\frac{\varepsilon}{3}%
,\ \ \ P-a.s..
\]
Denote$\ $%
\[%
\begin{array}
[c]{l}%
v(\cdot):=\sum\limits_{i=1}^{\infty}1_{A_{i}}v^{i}(\cdot)\in\mathbb{U}[s,T],\\
Q^{\prime}(A):=\sum\limits_{i=1}^{\infty}Q^{i}(A\cap A_{i}),\text{
\ \ }\forall A\in\mathcal{B}(\Omega_{T}).
\end{array}
\]
We have%

\[%
\begin{array}
[c]{cl}%
Y_{s}^{s,\xi,v;Q^{\prime}} & \geq-\mid Y_{s}^{s,\eta,v;Q^{\prime}}%
-Y_{s}^{s,\xi,v;Q^{\prime}}\mid+Y_{s}^{s,\eta,v;Q^{\prime}}\\
& \geq-\frac{\varepsilon}{3}+\sum\limits_{i=1}^{\infty}1_{A_{i}}Y_{s}%
^{s,x^{i},v^{i};Q^{i}}\\
& \geq-\frac{\varepsilon}{3}+\sum\limits_{i=1}^{\infty}1_{A_{i}}%
(V(s,x^{i})-\frac{\varepsilon}{3})\\
& =-\frac{2\varepsilon}{3}+\sum\limits_{i=1}^{\infty}1_{A_{i}}V(s,x^{i})\\
& =-\frac{2\varepsilon}{3}+V(s,\eta)\\
& \geq-\varepsilon+V(s,\xi),\ \ \ P-a.s..
\end{array}
\]
\bigskip

\textbf{(iii)} By (\ref{dpp ine-1}) and (\ref{dpp ine-2}), it is easy to prove
(\ref{dpp ine-3}).

The proof is completed. $\ \ \ \ \ \Box$

Define the (backward) semigroup%

\[
G_{t,s}^{t,x,u;P}[\eta]=Y_{t}^{t,x,u;P},
\]
where $\eta\in L_{G}^{2+\varepsilon}(\Omega_{s})$ and $(X_{r}^{t,x,u;P}%
,Y_{r}^{t,x,u;P},Z_{r}^{t,x,u;P})_{t\leq r\leq s}\ $is\ the\ solution of the
following forward-backward system:
\begin{align*}
dX_{s}^{t,x,u;P}  &  =b(s,X_{s}^{t,x,u;P},u_{s})ds+h_{ij}(s,X_{s}%
^{t,x,u;P},u_{s})d\langle B^{i},B^{j}\rangle_{s}+\sigma(s,X_{s}^{t,x,u;P}%
,u_{s})dB_{s},\\
X_{t}^{t,x,u;P}  &  =x
\end{align*}

and%

\[%
\begin{array}
[c]{rl}%
-dY_{r}^{t,x,u;P}= & f(X_{r}^{t,x,u;P},Y_{r}^{t,x,u;P},Z_{r}^{t,x,u;P}%
,u_{r})dr-Z_{r}^{t,x,u;P}dB_{r}\\
& +g_{ij}(X_{r}^{t,x,u;P},Y_{r}^{t,x,u;P},Z_{r}^{t,x,u;P},u_{r})d\langle
B^{i},B^{j}\rangle_{r},\\
Y_{s}^{t,x,u;P}= & \eta,\text{ \ \ }r\in\lbrack t,s],\text{ \ \ }P-a.s..
\end{array}
\]

It is obvious that for $Q\in\mathcal{P}_{M}(t,P)$\
\[
G_{t,T}^{t,x,u;Q}[\Phi(X_{T}^{t,x,u;Q})]=G_{t,s}^{t,x,u;Q}[Y_{s}^{t,x,u;Q}].
\]

Now we give the proof of Theorem \ref{Thm-dpp}:

\noindent\textbf{Proof{. }}By Theorem \ref{Compare},\textbf{{ }}for each fixed
$P\in\mathcal{P}_{M}$, we have
\[%
\begin{array}
[c]{cl}%
V(t,x) & =\underset{u(\cdot)\in\mathcal{U}^{t}[t,T]}{\text{ess}\sup}\text{
}\underset{Q\in\mathcal{P}_{M}(t,P)}{\text{ess}\sup}^{P}\text{ }%
G_{t,T}^{t,x,u;Q}[\Phi(X_{T}^{t,x,u;Q})]\\
& =\underset{u(\cdot)\in\mathcal{U}^{t}[t,s]}{\text{ess}\sup}\text{
}\underset{Q\in\mathcal{P}_{M}(t,P)}{\text{ess}\sup}^{P}\text{ }%
G_{t,s}^{t,x,u;Q}[Y_{s}^{t,x,u;Q}]\\
& =\underset{u(\cdot)\in\mathcal{U}[t,s]}{\text{ess}\sup}\text{ }%
\underset{Q\in\mathcal{P}_{M}(t,P)}{\text{ess}\sup}^{P}G_{t,s}^{t,x,u;Q}%
[Y_{s}^{s,X_{s}^{t,x,u},u;Q}],\text{ \ \ }P-a.s..
\end{array}
\]

By Lemma \ref{Lem-dpp ine} and the comparison theorem of BSDE, we have%

\[
V(t,x)\leq\underset{u(\cdot)\in\mathcal{U}[t,s]}{\text{ess}\sup}\text{
}\underset{Q\in\mathcal{P}_{M}(t,P)}{\text{ess}\sup}G_{t,s}^{t,x,u;Q}%
[V(s,X_{s}^{t,x,u})],\text{ \ \ }P-a.s..
\]

On the other hand, for each fixed $u(\cdot)\in\mathcal{U}[t,s]$,
$\forall\varepsilon>0,$ by Lemma \ref{Lem-dpp ine}, there exist $\bar{u}%
(\cdot)\in\mathbb{U}[s,T]$ and $\tilde{Q}\in\mathcal{P}_{M}(s,Q)$ such that%
\[
V(s,X_{s}^{t,x,u})\leq Y_{s}^{s,X_{s}^{t,x,u},\bar{u};\tilde{Q}}%
+\varepsilon,\text{ \ \ }Y_{t}^{t,x,\tilde{u};\tilde{Q}}\leq V(t,x),
\]
where
\[%
\begin{array}
[c]{l}%
\tilde{u}_{s}=1_{\{t\leq r\leq s\}}u_{r}+1_{\{s<r\leq T\}}\bar{u}_{r}.
\end{array}
\]
By the above inequality and the comparison theorem, we have%
\[%
\begin{array}
[c]{l}%
Y_{s}^{s,X_{s}^{t,x,u},\tilde{u};\tilde{Q}}\geq V(s,X_{s}^{t,x,u}%
)-\varepsilon,\\
V(t,x)\geq G_{t,s}^{t,x,u;Q}[Y_{s}^{s,X_{s}^{t,x,u},\tilde{u};\tilde{Q}}]\geq
G_{t,s}^{t,x,u;Q}[V(s,X_{s}^{t,x,u})-\varepsilon].
\end{array}
\]

By Lemma \ref{Lem-basic est-0}, there exists a constant $C_{0}$ such that%

\[
V(t,x)\geq G_{t,s}^{t,x,u;Q}[V(s,X_{s}^{t,x,u})]-C_{0}\varepsilon.
\]
\bigskip

From this we get%

\[
V(t,x)\geq\underset{u(\cdot)\in\mathcal{U}[t,s]}{\text{ess}\sup}\text{
}\underset{Q\in\mathcal{P}_{M}(t,P)}{\text{ess}\sup}^{P}\text{ }%
G_{t,s}^{t,x,u;Q}[V(s,X_{s}^{t,x,u})]-C_{0}\varepsilon.
\]

Thus by letting $\varepsilon\downarrow0$, we obtain%
\[
V(t,x)=\underset{u(\cdot)\in\mathcal{U}[t,s]}{\text{ess}\sup}\text{
}\underset{Q\in\mathcal{P}_{M}(t,P)}{\text{ess}\sup}^{P}\text{ }%
G_{t,s}^{t,x,u;Q}[V(s,X_{s}^{t,x,u;Q})],\text{ \ \ }P-a.s..
\]
Similar to the proof of Theorem \ref{Compare}, we can get%
\[
V(t,x)=\underset{u(\cdot)\in\mathcal{U}^{t}[t,s]}{\text{ess}\sup}\text{
}\underset{Q\in\mathcal{P}_{M}(t,P)}{\text{ess}\sup}^{P}\text{ }%
G_{t,s}^{t,x,u;Q}[V(s,X_{s}^{t,x,u;Q})],\text{ \ \ }P-a.s..
\]

Note that%
\[%
\begin{array}
[c]{rl}
& \underset{Q\in\mathcal{P}_{M}(t,P)}{\text{ess}\sup}^{P}G_{t,s}%
^{t,x,u;Q}[V(s,X_{s}^{t,x,u})]\\
= & \mathbb{G}_{t,s}^{t,x,u}[V(s,X_{s}^{t,x,u})]\text{ \ \ }P-a.s.
\end{array}
\]

We have%
\[%
\begin{array}
[c]{rl}%
V(t,x)= & \underset{u(\cdot)\in\mathcal{U}[t,s]}{\text{ess}\sup}%
\mathbb{G}_{t,s}^{t,x,u}[V(s,X_{s}^{t,x,u})]\\
= & \underset{u(\cdot)\in\mathcal{U}^{t}[t,s]}{\sup}\mathbb{G}_{t,s}%
^{t,x,u}[V(s,X_{s}^{t,x,u})].
\end{array}
\]
This completes the proof. $\ \ \ \Box$

The following lemma show the continuity of $V$ about $t.$

\begin{lemma}
\label{Lem-dpp ine 2}The value function $V$ is $\frac{1}{2}$ H\"{o}lder
continuous in $t$.
\end{lemma}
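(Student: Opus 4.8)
The plan is to exploit the dynamic programming principle (Theorem \ref{Thm-dpp}) together with the a priori estimates already established. Fix $t<t'$ in $[0,T]$ and $x\in\mathbb{R}^n$, and set $\delta=t'-t$. By the DPP we may write
\[
V(t,x)=\underset{u(\cdot)\in\mathcal{U}^{t}[t,t']}{\sup}\mathbb{G}_{t,t'}^{t,x,u}[V(t',X_{t'}^{t,x,u})].
\]
Since $V(t',\cdot)$ is the terminal condition of the backward semigroup over a short interval $[t,t']$, the idea is to compare $\mathbb{G}_{t,t'}^{t,x,u}[V(t',X_{t'}^{t,x,u})]$ with $V(t',x)$ and show the discrepancy is $O(\delta^{1/2})$ uniformly in $u$. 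First I would insert the constant $V(t',x)$: using the Lipschitz continuity of $V$ in $x$ from Lemma \ref{Lem-basic est-1}(i), together with the standard forward estimate $E_P[\sup_{s\in[t,t']}|X_s^{t,x,u;P}-x|^2\mid\mathcal{F}_t]\le C(1+|x|^2)\delta$ (which follows from the linear growth bounds in the Remark after Assumption \ref{assu-2} and the moment estimates underlying Lemma \ref{Lem-basic est-0}), I obtain
\[
|V(t',X_{t'}^{t,x,u})-V(t',x)|\le C_0|X_{t'}^{t,x,u}-x|,\qquad
E_P\big[|V(t',X_{t'}^{t,x,u})-V(t',x)|^2\mid\mathcal{F}_t\big]^{1/2}\le C(1+|x|)\delta^{1/2}.
\]

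Next I would estimate the effect of the short-time BSDE flow on the constant terminal value $V(t',x)$, i.e.\ bound $|\mathbb{G}_{t,t'}^{t,x,u}[V(t',x)]-V(t',x)|$. Writing out the backward equation (\ref{prob--state-2}) on $[t,t']$ with terminal datum $V(t',x)$ and taking conditional expectation under each $P\in\mathcal{P}_M$ (the $dB_s$ term vanishes in expectation and $K$ is a decreasing $G$-martingale), the remaining driver contributions are integrals of $f$ and $g_{ij}$ over an interval of length $\delta$; the linear-growth bounds on $f,g_{ij}$ together with the $L^2$ estimates on $(Y,Z)$ yield $|\mathbb{G}_{t,t'}^{t,x,u}[V(t',x)]-V(t',x)|\le C(1+|x|)\delta^{1/2}$, again uniformly in $u$. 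Combining this with the previous step, and using the stability of $\mathbb{G}$ with respect to its terminal value (a consequence of Lemma \ref{Lem-basic est-0}), gives $|V(t,x)-V(t',x)|\le C(1+|x|)\delta^{1/2}$, and symmetrizing in $t,t'$ completes the argument.

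The step I expect to be the main obstacle is making the short-time BSDE estimate genuinely uniform in the control $u$ and, more delicately, uniform over the non-dominated family $\mathcal{P}_M$, since there is no single reference measure. The presence of the decreasing $G$-martingale $K^{t,x,u}$ is the subtle point: one must verify that $\mathbb{\hat{E}}_t[K_{t'}^{t,x,u}-K_t^{t,x,u}]=0$ (so that $K$ does not contribute a spurious term at the level of the conditional $G$-expectation) and control the $L^2_G$-norm of the increment $K_{t'}-K_t$ by $C\delta^{1/2}$. This requires invoking the representation $Y_t^{t,x,u}=\operatorname{ess\,sup}^P_{Q}Y_t^{t,x,u;Q}$ and the uniform bounds from \cite{HJPS1} on the BSDE solution triple, after which the $\frac12$-Hölder continuity in $t$ follows.
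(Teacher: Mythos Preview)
Your proposal is correct and follows essentially the same route as the paper: the paper's decomposition into $I_\delta^1$ and $I_\delta^2$ is exactly your two-step comparison (perturb the terminal value from $V(t',X_{t'}^{t,x,u})$ to $V(t',x)$, then estimate the short-time backward flow acting on the constant $V(t',x)$). Your anticipated obstacle with the decreasing $G$-martingale $K$ is in fact a non-issue---the paper simply uses the identity $\mathbb{G}_{t,t+\delta}^{t,x,u}[\eta]=\hat{\mathbb{E}}_t\big[\eta+\int_t^{t+\delta}f\,ds+\int_t^{t+\delta}g_{ij}\,d\langle B^i,B^j\rangle_s\big]$ (the $dB$-integral and $K$ both vanish under $\hat{\mathbb{E}}_t$ by their $G$-martingale property), so neither an $L^2_G$-bound on $K_{t'}-K_t$ nor a detour through individual measures $P\in\mathcal{P}_M$ is needed.
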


\textbf{Proof. }Set $(t,x)\in\mathbb{R}^{n}\times\lbrack0,T]$ and $\delta>0.$
By dynamic programming principle, $\forall$ $\varepsilon>0,$ there exist
$u(\cdot)\in$ $\mathcal{U}^{t}$ such that

\begin{equation}%
\begin{array}
[c]{c}%
\mathbb{G}_{t,t+\delta}^{t,x,u}[V(t+\delta,X_{t+\delta}^{t,x,u})]+\varepsilon
\geq V(t,x)\geq\mathbb{G}_{t,t+\delta}^{t,x,u}[V(t+\delta,X_{t+\delta}%
^{t,x,u})].
\end{array}
\label{ct-1}%
\end{equation}

We first show that there exists $C>0$ such that $V(t+\delta,x)-V(t,x)\leq
C\delta^{\frac{1}{2}}.$ Similarly, we can prove $V(t+\delta,x)-V(t,x)\geq
-C\delta^{\frac{1}{2}}.$

By equation (\ref{ct-1}), we have%

\begin{equation}%
\begin{array}
[c]{c}%
V(t+\delta,x)-V(t,x)\leq I_{\delta}^{1}+I_{\delta}^{2},
\end{array}
\label{ct-2}%
\end{equation}

where
\[%
\begin{array}
[c]{cl}%
I_{\delta}^{1}= & \mathbb{G}_{t,t+\delta}^{t,x,u}[V(t+\delta,x)]-\mathbb{G}%
_{t,t+\delta}^{t,x,u}[V(t+\delta,X_{t+\delta}^{t,x,u})]\\
I_{\delta}^{2}= & V(t+\delta,x)-\mathbb{G}_{t,t+\delta}^{t,x,u}[V(t+\delta,x)]
\end{array}
\]

By Lemma \ref{Lem-basic est-1}, note that $V$ is 1-H\"{o}lder continuous in
$x$. We have%

\[
\left\vert I_{\delta}^{1}\right\vert \leq\lbrack C\mathbb{\hat{E}}\left\vert
V(t+\delta,x)-V(t+\delta,X_{t+\delta}^{t,x,u})\right\vert ]^{\frac{1}{2}}%
\leq\lbrack C\mathbb{\hat{E}}\left\vert X_{t+\delta}^{t,x,u}-x\right\vert
^{2}]^{\frac{1}{2}}.
\]

Then by $\mathbb{\hat{E}}\left\vert X_{t+\delta}^{t,x,u}-x\right\vert ^{2}\leq
C\delta$ ($C$ will change line by line),%

\[
\left\vert I_{\delta}^{1}\right\vert \leq C\delta^{\frac{1}{2}}.
\]

According to the definition of $\mathbb{G}_{t,t+\delta}^{t,x,u},$ $I_{\delta
}^{2}$ can be rewritten as
\[%
\begin{array}
[c]{cl}%
I_{\delta}^{2}= & V(t+\delta,x)-\mathbb{\hat{E}[}V(t+\delta,x)+\int%
_{t}^{t+\delta}f(s,X_{s}^{t,x,u},Y_{s}^{t,x,u},Z_{s}^{t,x,u},u_{s})ds\\
& +\int_{t}^{t+\delta}g_{ij}(s,X_{s}^{t,x,u},Y_{s}^{t,x,u},Z_{s}^{t,x,u}%
,u_{s})d\langle B^{i},B^{j}\rangle_{s}].
\end{array}
\]

It yields that%

\[%
\begin{array}
[c]{rl}%
\left\vert I_{\delta}^{2}\right\vert \leq & \delta^{\frac{1}{2}}%
\{[\mathbb{\hat{E}}\int_{t}^{t+\delta}\left\vert f(s,X_{s}^{t,x,u}%
,Y_{s}^{t,x,u},Z_{s}^{t,x,u},u_{s})\right\vert ^{2}ds]^{\frac{1}{2}}\\
& +[\mathbb{\hat{E}}\int_{t}^{t+\delta}\left\vert g_{ij}(s,X_{s}^{t,x,u}%
,Y_{s}^{t,x,u},Z_{s}^{t,x,u},u_{s})\right\vert ^{2}d\langle B^{i},B^{j}%
\rangle_{s}]^{\frac{1}{2}}\}\\
\leq & C\delta^{\frac{1}{2}}.
\end{array}
\]

Thus, we have
\[
V(t+\delta,x)-V(t,x)\leq C\delta^{\frac{1}{2}}.
\]

This completes the proof. $\ \ \ \Box$

\section{The viscosity solution of HJB equation}

The following theorem gives the relationship between the value function $V$
and the second-order partial differential equation (\ref{hjb}).

\begin{theorem}
\label{viscosity} Let Assumptions \ref{assu-1} and \ref{assu-2} hold. $V$ is
the value function defined by (\ref{value function}). Then $V$ is a viscosity
solution of the following second-order partial differential equation:
\begin{align}
&  \partial_{t}V(t,x)+\sup_{u\in U}H(t,x,V,\partial_{x}V,\partial_{xx}%
^{2}V,u)=0,\label{hjb}\\
&  V(T,x)=\Phi(x),\quad\ \ x\in\mathbb{R}^{n},\nonumber
\end{align}
where%
\[%
\begin{array}
[c]{cl}%
H(t,x,v,p,A,u)= & G(F(t,x,v,p,A,u))+\langle p,b(t,x,u)\rangle+f(t,x,v,\sigma
(t,x,u)p,u),\\
F_{ij}(t,x,v,p,A,u)= & \langle A\sigma_{i}(t,x,u),\sigma_{j}(t,x,u)\rangle
+2\langle p,h_{ij}(t,x,u)\rangle+2g_{ij}(t,x,v,\sigma(t,x,u)p,u),
\end{array}
\]
$(t,x,v,p,A,u)\in\lbrack0,T]\times\mathbb{R}^{n}\times\mathbb{R}%
\times\mathbb{R}^{d}\times\mathbb{S}_{n}\times U$, $\sigma_{i}$ is the $i$-th
column of $\sigma$, $G$ is defined by equation (\ref{Gequation}).
\end{theorem}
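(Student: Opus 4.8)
The plan is to show that $V$ is simultaneously a viscosity sub- and supersolution of \eqref{hjb}, using the standard definition via smooth test functions touching $V$ at a point. The terminal condition is immediate, since $X_{T}^{T,x,u}=x$ and $Y_{T}^{T,x,u}=\Phi(X_{T}^{T,x,u})$ force $V(T,x)=\Phi(x)$; the continuity needed to work with viscosity solutions is already supplied by Lemma \ref{Lem-basic est-1} (Lipschitz in $x$) and Lemma \ref{Lem-dpp ine 2} ($\tfrac{1}{2}$-Hölder in $t$). Everything then rests on a one-step expansion estimate: for $\phi\in C^{1,2}$ one has, uniformly over $u\in\mathcal{U}^{t}[t,t+\delta]$,
\[
\mathbb{G}_{t,t+\delta}^{t,x,u}[\phi(t+\delta,X_{t+\delta}^{t,x,u})]-\phi(t,x)\leq\delta\left[\partial_{t}\phi(t,x)+\sup_{u\in U}H(t,x,\phi,\partial_{x}\phi,\partial_{xx}^{2}\phi,u)\right]+o(\delta),
\]
with equality, the supremum replaced by the integrand at the frozen value, whenever $u\equiv\bar{u}$ is constant:
\[
\mathbb{G}_{t,t+\delta}^{t,x,\bar{u}}[\phi(t+\delta,X_{t+\delta}^{t,x,\bar{u}})]-\phi(t,x)=\delta\left[\partial_{t}\phi(t,x)+H(t,x,\phi,\partial_{x}\phi,\partial_{xx}^{2}\phi,\bar{u})\right]+o(\delta).
\]

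To prove this estimate I would fix $(t,x)$ and $\phi$, let $(Y^{1},Z^{1},K^{1})$ be the $G$-BSDE solution on $[t,t+\delta]$ with terminal value $\phi(t+\delta,X_{t+\delta}^{t,x,u})$ and generator $(f,g_{ij})$, so that $Y_{t}^{1}=\mathbb{G}_{t,t+\delta}^{t,x,u}[\phi(t+\delta,X_{t+\delta}^{t,x,u})]$, and apply the $G$-Itô formula (see \cite{P10}) to $\phi(s,X_{s}^{t,x,u})$. Writing $\hat{Y}_{s}=Y_{s}^{1}-\phi(s,X_{s}^{t,x,u})$ and $\hat{Z}_{s}=Z_{s}^{1}-\sigma^{T}\partial_{x}\phi$, the process $\hat{Y}$ solves a $G$-BSDE with zero terminal value, the \emph{same} decreasing $G$-martingale $K^{1}$, and generators $f(s,X_{s},Y_{s}^{1},Z_{s}^{1},u_{s})+\partial_{t}\phi+\langle\partial_{x}\phi,b\rangle$ in $ds$ and $g_{ij}(s,X_{s},Y_{s}^{1},Z_{s}^{1},u_{s})+\langle\partial_{x}\phi,h_{ij}\rangle+\tfrac{1}{2}\langle\partial_{xx}^{2}\phi\,\sigma_{i},\sigma_{j}\rangle$ in $d\langle B^{i},B^{j}\rangle_{s}$. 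The crucial point is that the $dB_{s}$ part has been absorbed into $\hat{Z}$, so only the $ds$ and $d\langle B^{i},B^{j}\rangle$ generators survive. Comparing $\hat{Y}$ with the frozen-coefficient $G$-BSDE obtained by evaluating these generators at $(t,x,\phi(t,x),\sigma^{T}\partial_{x}\phi(t,x),u_{s})$ — a generator independent of $(y,z)$, whose solution is just the conditional $G$-expectation $\mathbb{\hat{E}}_{t}$ of the integrated coefficients — the a priori estimates of Lemma \ref{Lem-basic est-0} and their $G$-BSDE analogues from \cite{HJPS1} bound the error by $o(\delta)$, uniformly in $u$. Since the frozen $d\langle B^{i},B^{j}\rangle$-coefficient equals $\tfrac{1}{2}F_{ij}$, the identity $\mathbb{\hat{E}}_{t}[A_{ij}(\langle B^{i},B^{j}\rangle_{t+\delta}-\langle B^{i},B^{j}\rangle_{t})]=2G(A)\delta$ for constant $A$, and the quasi-sure bound $A_{ij}\,d\langle B^{i},B^{j}\rangle_{s}\leq 2G(A)\,ds$ for time-varying $A$, turn the variation term into exactly $G(F)$, producing $\partial_{t}\phi+H$ as claimed.

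With this estimate the two viscosity inequalities follow from the dynamic programming principle (Theorem \ref{Thm-dpp}) and the comparison theorem for $G$-BSDEs from \cite{HJPS1}. For the subsolution property, take $\phi\in C^{1,2}$ with $\phi\geq V$ and $\phi(t,x)=V(t,x)$; comparison gives $\mathbb{G}_{t,t+\delta}^{t,x,u}[V(t+\delta,\cdot)]\leq\mathbb{G}_{t,t+\delta}^{t,x,u}[\phi(t+\delta,\cdot)]$, so by \eqref{DPP}
\[
0\leq\underset{u\in\mathcal{U}^{t}[t,t+\delta]}{\sup}\left(\mathbb{G}_{t,t+\delta}^{t,x,u}[\phi(t+\delta,X_{t+\delta}^{t,x,u})]-\phi(t,x)\right)\leq\delta\left[\partial_{t}\phi+\sup_{u\in U}H(t,x,\phi,\partial_{x}\phi,\partial_{xx}^{2}\phi,u)\right]+o(\delta),
\]
and dividing by $\delta$ and letting $\delta\downarrow0$ yields $\partial_{t}\phi+\sup_{u\in U}H\geq0$. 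For the supersolution property one takes $\phi\leq V$ with $\phi(t,x)=V(t,x)$, obtains $0\geq\mathbb{G}_{t,t+\delta}^{t,x,\bar{u}}[\phi(t+\delta,\cdot)]-\phi(t,x)$ for each constant control $\bar{u}\in U$, invokes the constant-control (equality) form of the expansion, and takes the supremum over $\bar{u}$ to get $\partial_{t}\phi+\sup_{u\in U}H\leq0$.

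The main obstacle is the one-step expansion, and within it two genuinely $G$-specific difficulties. First, one must control the correction $\hat{Z}=Z^{1}-\sigma^{T}\partial_{x}\phi$ and the frozen-coefficient errors \emph{uniformly} in $\delta$ and in the control, which forces the use of the $G$-BSDE a priori estimates rather than the classical ones and must be carried out in the absence of a dominating measure. Second, the decreasing $G$-martingale $K^{1}$ together with the nonlinearity of $\mathbb{\hat{E}}$ is exactly what converts the quadratic-variation term into $G(F)$; keeping the correct direction of the estimate — the pointwise bound $A_{ij}\,d\langle B^{i},B^{j}\rangle\leq 2G(A)\,ds$ for the subsolution and the exact identity for a constant control for the supersolution — is where the argument departs from Peng's Wiener-space proof.
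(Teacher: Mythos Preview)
Your proposal is correct and follows essentially the same architecture as the paper: apply the $G$-It\^o formula to $\phi(s,X_s^{t,x,u})$ to obtain a shifted $G$-BSDE for $\hat Y=Y-\phi$ with zero terminal value (this is the paper's $Y^{1,u}$), freeze the spatial argument to a nearby $G$-BSDE (the paper's $Y^{2,u}$) with an $O(\delta^{3/2})$ error controlled by the a~priori estimates of \cite{HJPS1}, and then read off the Hamiltonian from the frozen problem using DPP plus comparison.

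The one place where you diverge slightly from the paper is in how the frozen problem is resolved. You freeze the $(y,z)$-arguments as well, reducing to a $G$-BSDE with $(y,z)$-independent generator whose value is the conditional $G$-expectation of the integrated coefficients, and then invoke the quasi-sure bound $A_{ij}\,d\langle B^i,B^j\rangle_s\le 2G(A)\,ds$ for the subsolution direction and the exact identity $\hat{\mathbb E}_t[A_{ij}(\langle B^i,B^j\rangle_{t+\delta}-\langle B^i,B^j\rangle_t)]=2G(A)\delta$ for constant controls in the supersolution direction. The paper instead keeps the $(y,z)$-dependence in $Y^{2,u}$ and proves a separate lemma (Lemma~\ref{Lem-auxilary-3}) showing that $\operatorname{ess\,sup}_u Y_t^{2,u}$ coincides with the solution $Y^0_t$ of the deterministic ODE $-\dot Y^0=F_0(s,x,Y^0,0)$ with $F_0=\sup_u[F_1+2G(F_2)]$; the upper bound comes from the $G$-BSDE comparison theorem and the lower bound from restricting to deterministic controls. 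Your route is a bit more hands-on and requires checking that the additional $(y,z)$-freezing error is still $O(\delta^{3/2})$ (it is, since $|Y^{\mathrm{frozen}}|=O(\delta)$ and $\hat{\mathbb E}\int|Z^{\mathrm{frozen}}|^2=O(\delta^2)$), whereas the paper's ODE reduction handles the supremum over $u$ and the $G$-nonlinearity in one stroke and avoids that extra estimate.
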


For simplicity, we only consider the case $h_{ij}=g_{ij}=0$.

Suppose $\varphi\in C_{b,Lip}^{2,3}([t,T]\times\mathbb{R}^{n})$. Define%
\begin{equation}%
\begin{array}
[c]{l}%
F_{1}(s,x,y,z,u)=\partial_{s}\varphi(s,x)+\langle b(s,x,u),\partial_{x}%
\varphi(s,x)\rangle+f(s,x,y+\varphi(s,x),z+\partial_{x}\varphi(s,x)\sigma
(s,x,u),u),\\
F_{2}^{ij}(s,x,u)=\frac{1}{2}\langle\partial_{xx}^{2}\varphi(s,x)\sigma
_{i}(s,x,u),\sigma_{j}(s,x,u)\rangle.
\end{array}
\label{EQS-1}%
\end{equation}

Consider the following G-BSDEs: $\forall s\in\lbrack t,t+\delta],$%

\begin{equation}%
\begin{array}
[c]{rl}%
Y_{s}^{1,u}= & \int_{s}^{t+\delta}F_{1}(r,X_{r}^{t,x,u},Y_{r}^{1,u}%
,Z_{r}^{1,u},u_{r})dr+\int_{s}^{t+\delta}F_{2}^{ij}(r,X_{r}^{t,x,u}%
,u_{r})d\langle B^{i},B^{j}\rangle_{r}\\
& -\int_{s}^{t+\delta}Z_{r}^{1,u}dB_{r}-(K_{t+\delta}^{1,u}-K_{s}^{1,u}),\\
Y_{t+\delta}^{1,u}= & 0,
\end{array}
\label{auxilary-1}%
\end{equation}
and%
\begin{equation}
Y_{s}^{u}=\varphi(t+\delta,X_{t+\delta}^{t,x,u})+\int_{s}^{t+\delta}%
f(r,X_{r}^{t,x,u},Y_{r}^{u},Z_{r}^{u},u_{r})dr-\int_{s}^{t+\delta}Z_{r}%
^{u}dB_{r}-(K_{t+\delta}^{u}-K_{s}^{u}). \label{equa-1}%
\end{equation}

\begin{lemma}
$\forall s\in\lbrack t,t+\delta],$ we have%

\begin{equation}
Y_{s}^{1,u}=Y_{s}^{u}-\varphi(s,X_{s}^{t,x,u}). \label{auxilary-1-sol}%
\end{equation}

\end{lemma}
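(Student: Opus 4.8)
The plan is to verify the claimed identity \eqref{auxilary-1-sol} by substituting $Y^u_s - \varphi(s,X^{t,x,u}_s)$ into the G-BSDE \eqref{auxilary-1} and checking that it produces exactly the same driver, terminal condition, martingale integrand, and decreasing G-martingale part. Since the solution to a G-BSDE is unique under Assumption \ref{assu-2} (by the cited result of \cite{HJPS1}), matching the data suffices. First I would apply the G-It\^o formula to $\varphi(s,X^{t,x,u}_s)$ along the forward SDE for $X^{t,x,u}$ with $h_{ij}=0$; this yields
\[
d\varphi(s,X^{t,x,u}_s)=\big[\partial_s\varphi+\langle b,\partial_x\varphi\rangle\big]ds+\tfrac12\langle\partial^2_{xx}\varphi\,\sigma_i,\sigma_j\rangle\,d\langle B^i,B^j\rangle_s+\langle\partial_x\varphi\,\sigma\,,dB_s\rangle,
\]
where all coefficients are evaluated at $(s,X^{t,x,u}_s,u_s)$. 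The terms $\partial_s\varphi+\langle b,\partial_x\varphi\rangle$ and $\tfrac12\langle\partial^2_{xx}\varphi\,\sigma_i,\sigma_j\rangle$ are precisely the non-$f$ parts of $F_1$ and of $F_2^{ij}$ in \eqref{EQS-1}.

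Next I would set $\tilde Y_s:=Y^u_s-\varphi(s,X^{t,x,u}_s)$ and compute $-d\tilde Y_s$ by subtracting the It\^o expansion above from the dynamics of $Y^u$ in \eqref{equa-1}. The candidate integrand for $dB_s$ becomes $\tilde Z_s:=Z^u_s-\partial_x\varphi(s,X^{t,x,u}_s)\sigma(s,X^{t,x,u}_s,u_s)$, and the candidate decreasing G-martingale is $\tilde K_s:=K^u_s$ (the It\^o expansion of $\varphi$ contributes no $dK$ term). The $ds$-driver of $\tilde Y$ then collects $f(s,X^{t,x,u}_s,Y^u_s,Z^u_s,u_s)$ from \eqref{equa-1} together with $\partial_s\varphi+\langle b,\partial_x\varphi\rangle$ from the It\^o term. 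Writing $Y^u_s=\tilde Y_s+\varphi$ and $Z^u_s=\tilde Z_s+\partial_x\varphi\,\sigma$ inside $f$, and recalling the definition of $F_1$ in \eqref{EQS-1}, one sees the driver equals exactly $F_1(s,X^{t,x,u}_s,\tilde Y_s,\tilde Z_s,u_s)$, together with the $F_2^{ij}$ term against $d\langle B^i,B^j\rangle_s$. The terminal value matches since $\tilde Y_{t+\delta}=Y^u_{t+\delta}-\varphi(t+\delta,X^{t,x,u}_{t+\delta})=\varphi(t+\delta,X^{t,x,u}_{t+\delta})-\varphi(t+\delta,X^{t,x,u}_{t+\delta})=0$, matching the terminal condition of \eqref{auxilary-1}.

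Thus $(\tilde Y,\tilde Z,\tilde K)$ solves the same G-BSDE \eqref{auxilary-1} that defines $(Y^{1,u},Z^{1,u},K^{1,u})$, and uniqueness forces $\tilde Y=Y^{1,u}$, i.e.\ \eqref{auxilary-1-sol}. The main obstacle I anticipate is the careful bookkeeping in the G-It\^o formula: one must confirm that $\varphi$, being deterministic and smooth with $\varphi\in C^{2,3}_{b,Lip}$, together with the Lipschitz and linear-growth bounds on $b,\sigma,f$, ensures that $\tilde Z\in M^2_G(t,t+\delta)$ and that $\tilde K_s=K^u_s$ is still a decreasing G-martingale with $\tilde K_{t}=0$ under the shifted terminal time, so that $(\tilde Y,\tilde Z,\tilde K)$ genuinely lies in the admissible solution class for which the uniqueness theorem of \cite{HJPS1} applies. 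Verifying these integrability and regularity memberships, rather than the algebraic matching of coefficients, is where the real work lies.
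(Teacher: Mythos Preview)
Your proposal is correct and follows essentially the same approach as the paper: apply the $G$-It\^o formula to $\varphi(s,X_s^{t,x,u})$, subtract from the dynamics of $Y^u$ in \eqref{equa-1}, match the driver with $F_1,F_2^{ij}$ and the terminal condition with $0$, and conclude by uniqueness of the $G$-BSDE solution. The paper's proof is simply a two-line compression of exactly this computation.
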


\noindent\textbf{Proof. }Applying It\^{o}'s formula to $\varphi(s,X_{s}%
^{t,x,u})$, we have%

\[
d(Y_{s}^{u}-\varphi(s,X_{s}^{t,x,u}))=dY_{s}^{1,u}.
\]

Since $Y_{t+\delta}^{u}-\varphi(t+\delta,X_{t+\delta}^{t,x,u})=Y_{t+\delta
}^{1,u}=0,$ we obtain%

\[
Y_{s}^{1,u}=Y_{s}^{u}-\varphi(s,X_{s}^{t,x,u}),\text{ \ \ }\forall s\in\lbrack
t,t+\delta].
\]

The proof is completed. $\ \ \ \ \ \Box$

Consider the G-BSDE: $\forall s\in\lbrack t,t+\delta],$%

\begin{equation}%
\begin{array}
[c]{rl}%
Y_{s}^{2,u}= & \int_{s}^{t+\delta}F_{1}(r,x,Y_{r}^{2,u},Z_{r}^{2,u}%
,u_{r})dr+\int_{s}^{t+\delta}F_{2}^{ij}(r,x,u_{r})d\langle B^{i},B^{j}%
\rangle_{r}-\int_{s}^{t+\delta}Z_{r}^{2,u}dB_{r}-(K_{t+\delta}^{2,u}%
-K_{s}^{2,u}).
\end{array}
\label{auxilary-2}%
\end{equation}

We have the following estimation.

\begin{lemma}
We have
\begin{equation}
\mid\underset{u(\cdot)\in\mathcal{U}[t,T]}{\text{ess}\sup}Y_{t}^{1,u}%
-\underset{u(\cdot)\in\mathcal{U}[t,T]}{\text{ess}\sup}Y_{t}^{2,u}\mid\leq
C\delta^{3/2}, \label{auxilary-ine}%
\end{equation}
where $C$ is a positive constant independent of $u(\cdot)$.
\end{lemma}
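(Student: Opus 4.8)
The plan is to prove the stronger pointwise bound $|Y_{t}^{1,u}-Y_{t}^{2,u}|\le C\delta^{3/2}$ quasi-surely, with $C$ independent of $u(\cdot)$, and then to pass to the essential suprema by the elementary inequality $|\,\text{ess}\sup_{u}a_{u}-\text{ess}\sup_{u}b_{u}\,|\le\text{ess}\sup_{u}|a_{u}-b_{u}|$. Since the pointwise bound is a deterministic constant, it dominates each $|Y_{t}^{1,u}-Y_{t}^{2,u}|$ q.s., so (\ref{auxilary-ine}) follows at once once the pointwise estimate is in hand.

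The two equations (\ref{auxilary-1}) and (\ref{auxilary-2}) share the terminal value $0$ and the same structure, differing only in the spatial argument of the generators: it is $X_{r}^{t,x,u}$ in the former and the frozen point $x$ in the latter. I would therefore apply the a priori (stability) estimate for $G$-BSDEs from \cite{HJPS1} to the difference of the two solutions, in the conditional form
$$|Y_{t}^{1,u}-Y_{t}^{2,u}|^{2}\le C\,\mathbb{\hat{E}}_{t}\Big[\Big(\int_{t}^{t+\delta}|\Delta F_{1}(r)|\,dr\Big)^{2}+\Big(\int_{t}^{t+\delta}|\Delta F_{2}^{ij}(r)|\,dr\Big)^{2}\Big],$$
where $\Delta F_{1}(r)=F_{1}(r,X_{r}^{t,x,u},Y_{r}^{2,u},Z_{r}^{2,u},u_{r})-F_{1}(r,x,Y_{r}^{2,u},Z_{r}^{2,u},u_{r})$ and $\Delta F_{2}^{ij}(r)=F_{2}^{ij}(r,X_{r}^{t,x,u},u_{r})-F_{2}^{ij}(r,x,u_{r})$. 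Here I use that, under non-degeneracy, $\langle B^{i},B^{j}\rangle$ is absolutely continuous with quasi-surely bounded density, so the $d\langle B^{i},B^{j}\rangle_{r}$-integral is dominated by a $dr$-integral of the same order, and the decreasing $G$-martingales $K^{1,u},K^{2,u}$ are absorbed inside this cited estimate rather than being subtracted directly.

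Next I would bound the generator increments by the state increment. From the formulas (\ref{EQS-1}), using that $\varphi\in C_{b,Lip}^{2,3}$ (so $\partial_{s}\varphi,\partial_{x}\varphi,\partial_{xx}^{2}\varphi$ are bounded and Lipschitz in $x$) together with Assumptions \ref{assu-1} and \ref{assu-2}, one obtains $|\Delta F_{1}(r)|+|\Delta F_{2}^{ij}(r)|\le C(1+|x|)\,|X_{r}^{t,x,u}-x|$, with $C$ uniform in $u$ and $\delta$. The only point requiring care is that $b,\sigma$ have merely linear growth while $\partial_{x}\varphi,\partial_{xx}^{2}\varphi$ are bounded, so products such as $\langle b,\partial_{x}\varphi\rangle$ and $\langle\partial_{xx}^{2}\varphi\,\sigma_{i},\sigma_{j}\rangle$ are Lipschitz in $x$ only with the harmless fixed factor $(1+|x|)$. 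Substituting, then applying the Cauchy--Schwarz inequality in $r$ and the Fubini-type inequality $\mathbb{\hat{E}}_{t}[\int\cdot\,dr]\le\int\mathbb{\hat{E}}_{t}[\cdot]\,dr$, the right-hand side reduces to $C(1+|x|)^{2}\,\delta\int_{t}^{t+\delta}\mathbb{\hat{E}}_{t}[|X_{r}^{t,x,u}-x|^{2}]\,dr$.

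Finally I would invoke the conditional small-time estimate $\mathbb{\hat{E}}_{t}[|X_{r}^{t,x,u}-x|^{2}]\le C\delta$ for $r\in[t,t+\delta]$, uniform in $u$ (the same estimate already used in the proof of Lemma \ref{Lem-dpp ine 2}), which gives $\int_{t}^{t+\delta}\mathbb{\hat{E}}_{t}[|X_{r}^{t,x,u}-x|^{2}]\,dr\le C\delta^{2}$ and hence $|Y_{t}^{1,u}-Y_{t}^{2,u}|^{2}\le C\delta^{3}$, i.e.\ $|Y_{t}^{1,u}-Y_{t}^{2,u}|\le C\delta^{3/2}$ with $C$ independent of $u(\cdot)$; passing to the essential suprema as in the first step yields (\ref{auxilary-ine}). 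I expect the main obstacle to be the stability estimate of the second step: because the two $G$-BSDEs carry distinct decreasing $G$-martingales that cannot be cancelled, I would either quote the a priori estimate from \cite{HJPS1} directly, or, equivalently, freeze a measure $Q\in\mathcal{P}_{M}$, estimate $|Y_{t}^{1,u;Q}-Y_{t}^{2,u;Q}|$ by the classical BSDE stability estimate (in the spirit of Lemma \ref{Lem-basic est-0}) uniformly in $Q$, and recover the $G$-BSDE difference through the representation $Y_{t}^{\cdot,u}=\text{ess}\sup_{Q}^{P}Y_{t}^{\cdot,u;Q}$. In all cases the crucial verification is that every constant remains uniform over the admissible controls.
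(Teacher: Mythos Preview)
Your approach is essentially the paper's: invoke the $G$-BSDE stability estimate from \cite{HJPS1} (there Proposition 3.9), bound the generator increments by the state increment $|X_{r}^{t,x,u}-x|$, feed in the small-time $G$-SDE estimate, and finally pass to the essential suprema via $|\,\text{ess}\sup_{u}a_{u}-\text{ess}\sup_{u}b_{u}\,|\le\text{ess}\sup_{u}|a_{u}-b_{u}|$.

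Two small points of calibration. First, your bound $|\Delta F_{1}|+|\Delta F_{2}^{ij}|\le C(1+|x|)\,|X_{r}^{t,x,u}-x|$ is slightly off: since $\sigma$ has only linear growth, the map $x\mapsto\langle\partial_{xx}^{2}\varphi\,\sigma_{i},\sigma_{j}\rangle$ has a local Lipschitz constant of order $(1+|x|+|X_{r}^{t,x,u}|)$, so an extra quadratic term appears. The paper accordingly writes $\hat{F}_{r}\le C_{1}(|X_{r}^{t,x,u}-x|+|X_{r}^{t,x,u}-x|^{2})$; this is harmless because $\mathbb{\hat{E}}[\sup_{r\in[t,t+\delta]}|X_{r}^{t,x,u}-x|^{p'}]\le C(1+|x|^{p'})\delta^{p'/2}$ for every $p'\ge 2$, uniformly in $u$. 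Second, the stability estimate in \cite{HJPS1} is not quite the clean conditional-$L^{2}$ inequality you wrote; it is stated for some $p>2$ with a $\sup_{s}\mathbb{\hat{E}}_{s}[\cdot]$ structure, and the paper combines it with Theorem 2.13 of \cite{HJPS1} to close the argument. This is a cosmetic difference and your fallback through the $Q$-representation and classical BSDE stability (as in Lemma \ref{Lem-basic est-0}) is an acceptable alternative.
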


\noindent\textbf{Proof.}\ By Proposition 3.9 in \cite{HJPS1}, we have for any
fixed $u(\cdot)\in\mathcal{U}[t,T]$ and $p>2$%
\begin{align*}
|Y_{t}^{1,u}-Y_{t}^{2,u}|^{2}  &  \leq\mathbb{\hat{E}}[\underset{s\in\lbrack
t,t+\delta]}{\sup}|Y_{s}^{1,u}-Y_{s}^{2,u}|^{2}]\\
&  \leq C\{(\mathbb{\hat{E}}[\sup_{s\in\lbrack t,t+\delta]}\mathbb{\hat{E}%
}_{s}[(\int_{t}^{t+\delta}\hat{F}_{r}dr)^{p}]])^{2/p}+\mathbb{\hat{E}}%
[\sup_{s\in\lbrack t,t+\delta]}\mathbb{\hat{E}}_{s}[(\int_{t}^{t+\delta}%
\hat{F}_{r}dr)^{p}]]\},
\end{align*}
where $\hat{F}_{r}=|F_{1}(r,X_{r}^{t,x,u},Y_{r}^{2,u},Z_{r}^{2,u},u_{r}%
)-F_{1}(r,x,Y_{r}^{2,u},Z_{r}^{2,u},u_{r})|+\sum_{i,j=1}^{d}|F_{2}%
^{ij}(r,X_{r}^{t,x,u},Y_{r}^{2,u},Z_{r}^{2,u},u_{r})-F_{2}^{ij}(r,x,Y_{r}%
^{2,u},Z_{r}^{2,u},u_{r})|$. It is easy to verify that
\[
\hat{F}_{r}\leq C_{1}(|X_{r}^{t,x,u}-x|+|X_{r}^{t,x,u}-x|^{2}),
\]
where $C_{1}$ is independent of $u(\cdot)$. By standard estimates of G-SDE, we
can obtain that for any $p^{\prime}\geq2$%
\[
\mathbb{\hat{E}}[\underset{r\in\lbrack t,t+\delta]}{\sup}\mid X_{r}%
^{t,x,u}-x\mid^{p^{\prime}}]\leq C_{2}(1+|x|^{p^{\prime}})\delta^{p^{\prime
}/2},
\]
where $C_{2}$ is independent of $u(\cdot)$. Then by Theorem 2.13 in
\cite{HJPS1} we can deduce that $|Y_{t}^{1,u}-Y_{t}^{2,u}|\leq C\delta^{3/2}$,
where $C$ is independent of $u(\cdot)$. Thus%
\[
\mid\underset{u(\cdot)\in\mathcal{U}[t,T]}{\text{ess}\sup}Y_{t}^{1,u}%
-\underset{u(\cdot)\in\mathcal{U}[t,T]}{\text{ess}\sup}Y_{t}^{2,u}\mid
\leq\underset{u(\cdot)\in\mathcal{U}[t,T]}{\text{ess}\sup}|Y_{t}^{1,u}%
-Y_{t}^{2,u}|\leq C\delta^{3/2}.
\]

This completes\ the proof. $\ \ \ \ \ \Box$

Now we compute $\underset{u(\cdot)\in\mathcal{U}[t,t+\delta]}{\text{ess}\sup}$
$Y_{t}^{2,u}.$

\begin{lemma}
\label{Lem-auxilary-3}We have
\[
\underset{u(\cdot)\in\mathcal{U}[t,t+\delta]}{\text{ess}\sup}\text{ }%
Y_{t}^{2,u}=Y_{t}^{0},
\]
where $Y^{0}$ is the solution of the following ordinary differential equation%
\begin{equation}
-dY_{s}^{0}=F_{0}(s,x,Y_{s}^{0},0)ds,\text{ \ \ }Y_{t+\delta}^{0}=0,\text{
}s\in\lbrack t,t+\delta] \label{auxilary-3}%
\end{equation}
and%
\[%
\begin{array}
[c]{rl}%
F_{0}(s,x,y,z):= & \underset{u\in U}{\sup}[F_{1}(s,x,y,z,u)+2G(F_{2}(s,x,u))].
\end{array}
\]

\end{lemma}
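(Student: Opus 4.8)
The plan is to reduce the $G$-BSDE (\ref{auxilary-2}) to a family of classical BSDEs by means of the representation $Y_t^{2,u}=\underset{Q\in\mathcal{P}_{M}(t,P)}{\text{ess}\sup}^{P}Y_t^{2,u;Q}$ recalled in Section 2, and then to identify $Y^0$ as the value of the resulting two-parameter optimization over the control $u$ and the measure $Q$. First I would fix $P\in\mathcal{P}_{M}$ and write, for each $Q=P_{\theta}\in\mathcal{P}_{M}(t,P)$ (so that $d\langle B^{i},B^{j}\rangle_{r}=(\theta_{r}\theta_{r}^{T})_{ij}\,dr$ with $\theta_{r}\in\Gamma$ q.s.), the classical BSDE on $[t,t+\delta]$ solved by $Y^{2,u;Q}$: its generator is $F_{1}(r,x,y,z,u_{r})+\mathrm{tr}[F_{2}(r,x,u_{r})\theta_{r}\theta_{r}^{T}]$ with terminal value $0$. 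Combining the two essential suprema as in the proof of Theorem \ref{Compare}, I obtain $\underset{u(\cdot)\in\mathcal{U}[t,t+\delta]}{\text{ess}\sup}Y_{t}^{2,u}=\underset{u,Q}{\text{ess}\sup}^{P}Y_{t}^{2,u;Q}$.

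For the upper bound I would use that $\theta_{r}\in\Gamma$ forces $\mathrm{tr}[F_{2}(r,x,u_{r})\theta_{r}\theta_{r}^{T}]\leq 2G(F_{2}(r,x,u_{r}))$ by (\ref{Gequation}), so that the generator above is pointwise dominated, uniformly in $u$ and $Q$, by $F_{0}(r,x,y,z)=\sup_{u\in U}[F_{1}(r,x,y,z,u)+2G(F_{2}(r,x,u))]$. Since $F_{0}$ is deterministic, continuous in $r$ and Lipschitz in $(y,z)$ (by Assumption \ref{assu-2} and compactness of $U$), and carries terminal value $0$, the corresponding BSDE is solved by $(Y^{0},0)$, where $Y^{0}$ is the ODE solution (\ref{auxilary-3}); indeed $\int_{s}^{t+\delta}F_{0}(r,x,Y_{r}^{0},0)\,dr=Y_{s}^{0}$. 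By the classical comparison theorem, $Y_{t}^{2,u;Q}\leq Y_{t}^{0}$ for every $u$ and every $Q$, whence $Y_{t}^{2,u}\leq Y_{t}^{0}$ $P$-a.s.\ for each $P\in\mathcal{P}_{M}$, i.e.\ q.s.\ by Remark \ref{ren1}, and taking the supremum gives $\underset{u(\cdot)\in\mathcal{U}[t,t+\delta]}{\text{ess}\sup}Y_{t}^{2,u}\leq Y_{t}^{0}$.

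For the matching lower bound I would produce, by a measurable selection argument, deterministic maps $u^{*}(r)\in U$ and $\gamma^{*}(r)\in\Gamma$ attaining the maximum defining $F_{0}(r,x,Y_{r}^{0},0)$ along the deterministic path $Y^{0}$. Because $Y^{0}$ is deterministic, $u^{*}$ is admissible in $\mathcal{U}^{t}[t,t+\delta]$, and taking $\theta^{*}=\gamma^{*}$ on $[t,t+\delta]$ while letting $\theta^{*}$ agree on $[0,t]$ with a representing density of $P$ yields $Q^{*}=P_{\theta^{*}}\in\mathcal{P}_{M}(t,P)$. For these choices the generator of $Y^{2,u^{*};Q^{*}}$ evaluated along $(Y^{0},0)$ equals $F_{0}(r,x,Y_{r}^{0},0)$, so $(Y^{0},0)$ solves that BSDE and, by uniqueness, $Y_{t}^{2,u^{*};Q^{*}}=Y_{t}^{0}$. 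Hence $\underset{u(\cdot)\in\mathcal{U}[t,t+\delta]}{\text{ess}\sup}Y_{t}^{2,u}\geq Y_{t}^{2,u^{*}}\geq Y_{t}^{2,u^{*};Q^{*}}=Y_{t}^{0}$, which together with the upper bound proves the claim.

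The main obstacle is the lower bound: guaranteeing that the pointwise maximizers of $(u,\gamma)\mapsto F_{1}(r,x,Y_{r}^{0},0,u)+\mathrm{tr}[F_{2}(r,x,u)\gamma\gamma^{T}]$ can be chosen measurably (hence as admissible processes), and verifying that the resulting $Q^{*}$ indeed lies in $\mathcal{P}_{M}(t,P)$. I would handle this with a Filippov-type measurable selection theorem, using compactness of $U$ and $\Gamma$ together with continuity of $F_{1}$, $F_{2}$ and $G$; should exact selection be delicate, I would instead take an $\varepsilon$-optimal selection and absorb the error via the a priori estimate of Theorem 2.13 in \cite{HJPS1}, letting $\varepsilon\downarrow 0$.
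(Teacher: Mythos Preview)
Your argument is correct, but the paper's proof is organised differently and stays entirely within the $G$-framework rather than unpacking into classical BSDEs under each $Q$. For the upper bound, the paper applies the $G$-BSDE comparison theorem (Theorem~3.7 of \cite{HJPS}) directly: since the generator of (\ref{auxilary-2}) is dominated by $F_{0}$, one has $Y^{2,u}_{s}\le Y^{0}_{s}$ for all $u$, where $Y^{0}$ solves the $G$-BSDE with driver $F_{0}$; because $F_{0}$ is deterministic this $G$-BSDE has $Z^{0}=0$, $K^{0}=0$, and $Y^{0}$ is the ODE (\ref{auxilary-3}). This replaces your representation $Y_{t}^{2,u}=\underset{Q}{\text{ess}\sup}^{P}Y_{t}^{2,u;Q}$ plus classical comparison. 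For the lower bound, the paper simply restricts to the class $\mathcal{U}_{1}$ of deterministic controls: for $u\in\mathcal{U}_{1}$ all data in (\ref{auxilary-2}) are deterministic, so $Y^{2,u}$ itself solves the ODE $-dY^{2,u}_{s}=[F_{1}(s,x,Y^{2,u}_{s},0,u_{s})+2G(F_{2}(s,x,u_{s}))]ds$, and then $Y^{0}_{t}=\sup_{u\in\mathcal{U}_{1}}Y^{2,u}_{t}$ is just the classical identification of the value of a deterministic control problem with the solution of the Hamiltonianised ODE. Your construction of optimal $(u^{*},\gamma^{*})$ and $Q^{*}\in\mathcal{P}_{M}(t,P)$ also works---and since $Y^{0}$ is deterministic your selectors are automatically deterministic functions of $r$, so the Filippov/measurable-selection issue you flag is essentially vacuous---but it re-derives the $2G(F_{2})$ term by hand via $\sup_{\gamma\in\Gamma}\mathrm{tr}[F_{2}\gamma\gamma^{T}]$, whereas the paper lets the $G$-BSDE machinery absorb that step. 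The paper's route is shorter and avoids building $Q^{*}$; yours makes more transparent where the $G$ operator actually enters.
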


\noindent\textbf{Proof. }By Theorem 3.7 of \cite{HJPS}, we have%

\[
Y_{s}^{2,u}\leq Y_{s}^{0},\text{ \ \ }s\in\lbrack t,t+\delta],
\]
where $(Y^{0},Z^{0},K^{0})$ is the solution of the following G-BSDE:%
\[%
\begin{array}
[c]{cl}%
Y_{s}^{0}= & \int_{s}^{t+\delta}F_{0}(r,x,Y_{r}^{0},Z_{r}^{0})dr-\int%
_{s}^{t+\delta}Z_{r}^{0}dB_{r}-(K_{t+\delta}^{0}-K_{s}^{0})\text{ \ \ }%
s\in\lbrack t,t+\delta].
\end{array}
\]
Since $F_{1}$ and $G(F_{2})$ are deterministic functions, we obtain that
$Z_{s}^{0}=0,K_{s}^{0}=0$ and $Y_{s}^{0}$ is the solution of equation
(\ref{auxilary-3}).

We denote the class of all deterministic controls in $\mathcal{U}[t,t+\delta]$
by $\mathcal{U}_{1}$. Then, for every $u(\cdot)\in\mathcal{U}_{1},$ $Y^{2,u}$
is the solution of the following ordinary differential equation:%

\[%
\begin{array}
[c]{l}%
-dY_{s}^{2,u}=[F_{1}(s,x,Y_{s}^{2,u},0,u_{s})+2G(F_{2}(s,x,u_{s}))]ds,\text{
\ \ }s\in\lbrack t,t+\delta],\\
Y_{t+\delta}^{2,u}=0.
\end{array}
\]
It is easy to check that%

\[
Y_{t}^{0}=\underset{u(\cdot)\in\mathcal{U}[t,t+\delta]}{\text{ess}\sup}\text{
}Y_{t}^{2,u}.
\]

This completes the proof.$\ \ \ \ \ \ \Box$

Finally we give the proof of Theorem \ref{viscosity}.

\noindent\textbf{Proof: }By Lemma \ref{Lem-basic est-1}, \ref{Lem-dpp ine 2},
$V$ is a continuous functions on $[0,T]\times\mathbb{R}^{n}$. We first prove
that $V$ is the subsolution of (\ref{hjb}).

Given $t\leq T$ and $x\in\mathbb{R}^{n}$, suppose $\varphi\in C_{b,Lip}%
^{2,3}([0,T]\times\mathbb{R}^{n})$ such that $\varphi(t,x)=V(t,x)$ and
$\varphi\geq V$ on $[0,T]\times\mathbb{R}^{n}$. By Theorem\ \ref{Thm-dpp}, we
have%
\[%
\begin{array}
[c]{rl}%
V(t,x)= & \underset{u(\cdot)\in\mathcal{U}[t,t+\delta]}{\text{ess}\sup
}\mathbb{G}_{t,t+\delta}^{t,x,u;Q}[V(t+\delta,X_{t+\delta}^{t,x,u;Q})].
\end{array}
\]

So%
\[
\underset{u(\cdot)\in\mathcal{U}[t,t+\delta]}{\text{ess}\sup}\text{
}\{\mathbb{G}_{t,t+\delta}^{t,x,u}[\varphi(t+\delta,X_{t+\delta}%
^{t,x,u})]-\varphi(t,x)\}\geq0.
\]
By (\ref{auxilary-1-sol}), we have
\[
\underset{u(\cdot)\in\mathcal{U}[t,t+\delta]}{\text{ess}\sup}Y_{t}^{1,u}%
\geq0.
\]
By (\ref{auxilary-ine}) and Lemma \ref{Lem-auxilary-3}, we get
\[
\underset{u(\cdot)\in\mathcal{U}[t,t+\delta]}{\text{ess}\sup}Y_{t}^{2,u}%
\geq-C\delta^{3/2}%
\]
and%

\[
Y_{t}^{0}\geq-C\delta^{3/2}.
\]
Thus,%
\[
-C\delta^{1/2}\leq\delta^{-1}Y_{t}^{0}=\delta^{-1}\int_{t}^{t+\delta}%
F_{0}(r,x,Y_{r}^{0},0)dr.
\]
Letting $\delta\rightarrow0$, we get $F_{0}(t,x,0,0)=\sup_{u\in U}%
(F_{1}(t,x,y,z,u)+G(F_{2}(t,x,u)))\geq0$, which implies that $V$ is a
subsolution of (\ref{hjb}). Using the same method, we can\ prove $V$ is the
supersolution of (\ref{hjb}).\quad

This completes the proof.$\ \ \ \ \ \ \Box$

\renewcommand{\refname}{\large References}

\bigskip

\bigskip

\textbf{Attachment}

\bigskip

\bigskip

\textbf{Acknowledgments}

The authors would like to thank S. Peng for many helpful discussions.

\end{document}